\documentclass[10pt,amscd]{amsart}
\usepackage{latexsym}
\usepackage{amsmath}
\usepackage{amssymb}
\usepackage[all]{xy}
\usepackage[dvips]{graphicx, color}

\newtheorem{thm}{Theorem}[section]

\newtheorem{prop}[thm]{Proposition}
\newtheorem{cor}[thm]{Corollary}
\newtheorem{lem}[thm]{Lemma}

\theoremstyle{definition}
\newtheorem{defn}[thm]{Definition}

\newtheorem{rem}[thm]{Remark}
\newtheorem{ex}[thm]{\bf Example}
\newtheorem{qu}[thm]{Question}
\newtheorem{pr}[thm]{Problem}
%
%
%


\newcommand{\Z}{{\mathbb Z}}

\newcommand{\K}{{\bold k}}

\newcommand{\Q}{{\mathbb Q}}

\newcommand{\C}{{\mathbb C}}
%

%

%
%

%

%

%

\newcommand{\mapright}[1]{%
 \smash{\mathop{%
  \hbox to 1cm{\rightarrowfill}}\limits_{#1}}}
\newcommand{\maprightd}[2]{%
 \smash{\mathop{%
  \hbox to 1.2cm{\rightarrowfill}}\limits^{#1}\limits_{#2}}}
\newcommand{\mapleft}[1]{%
 \smash{\mathop{%
  \hbox to 1cm{\leftarrowfill}}\limits_{#1}}}
\newcommand{\mapleftu}[1]{%
 \smash{\mathop{%
  \hbox to 1cm{\leftarrowfill}}\limits^{#1}}}
\newcommand{\mapleftud}[2]{%
 \smash{\mathop{%
  \hbox to 1.2cm{\leftarrowfill}}\limits^{#1}\limits_{#2}}}

\begin{document}
\title{C-symplectic poset structure on  a simply connected
 space} 

\author{Kazuya Hamada, Toshihiro Yamaguchi and Shoji Yokura }
\footnote[0]{MSC:  55P62, 57R17
\\Keywords: c-symplectic space, homotopy self-equivalences, Sullivan minimal model, $\K$-homotopy,  c-symplectic poset structure, c-symplectic depth   }
\date{}
\address{Kinkowan High School, 4047, Hirakawa-cho,  Kagoshima, 891-0133, JAPAN}
\email{Hamada-kazuya@edu.pref.kagoshima.jp }

\address{Faculty of Education, Kochi University, 2-5-1, Akebono-cho, Kochi,780-8520, JAPAN}
\email{tyamag@kochi-u.ac.jp}

\address{Faculty of Science, Kagoshima University, 21-35, Korimoto 1-chome, Kagoshima,  890-0065, JAPAN}
\email{yokura@sci.kagoshima-u.ac.jp}
\maketitle

\begin{abstract} For a field $\K$ of characteristic zero,
we introduce a cohomologically symplectic poset structure ${\mathcal P}_{\K}(X)$ on  a simply connected space $X$
from the viewpoint of $\K$-homotopy theory.
It is given by the poset of inclusions of subgroups preserving c-symplectic structures
  in the group ${\mathcal E}(X_{\K} )$ of  $\K$-homotopy classes  of  $\K$-homotopy self-equivalences
of $X$,
 which is defined by the $\K$-Sullivan model of $X$. 
We observe that the height of the Hasse diagram of ${\mathcal P}_{\K}(X)$ added by  $1$,
  denoted by  c-s-${\rm depth}_{\K}(X)$,
 is finite 
and often depends on the field $\K$.
 In this paper, we will give some examples of ${\mathcal P}_{\K}(X)$.
\end{abstract}

\section{Introduction}

Compact K\"ahler manifolds are well-known
 to be  symplectic manifolds and they are also formal spaces from the viewpoint of minimal models (\cite{DGMS}). 
However, symplecitic manifolds are not necessarily formal spaces (e.g.\cite{FHT}).
Dennis Sullivan posed the following problem \cite{Su}(cf.\cite[Problem 4]{TO},\cite[Question 4.99]{FOT}):
 {\it Are there algebraic conditions on the minimal model
of a compact manifold, implying the existence of a symplectic structure on it ?}
 There are interesting approaches to answer this question  
for certain cases (e.g. see \cite{LO1}, \cite{LO}, \cite{TO}). 
In this paper we discuss  simply connected
{cohomologically symplectic structures}, instead of the usual symplectic structures.
A Poincar\'{e} duality space (cf. \cite[Definition 3.1]{FOT}) $Y$ is said to have a {\it cohomologically symplectic} (abbr., {\it c-symplectic}) structure
if there is  a  rational cohomology 
class $\omega\in H^2(Y;\Q)$ such that $\omega^n$ is a top 
class for $Y$
(cf. \cite[Definition 4.87]{FOT}). 
 Since  $H^2(Y;\Z )\cong [Y,K(\Z , 2)]$ and $H^2(Y;\Q )\cong H^2(Y;\Z )\otimes \Q$,
we can consider the following situation:
\begin{enumerate}
\item an integral cohomology class $\mu\in H^2(Y;\Z)$ corresponds to the homotopy class of a map
$\mu: Y\to K(\Z,2)$ and 
\item $\mu\otimes 1_{\Q}$ in $ H^2(Y;\Z)\otimes \Q$
is a c-symplectic class on $Y$.
\end{enumerate}
Furthermore it follows from Serre's result that in the homotopy category
any map is ``the same" as a fibration $p:Y' \to K(\Z, 2).$ 
Thus from the beginning we can assume that $Y$ is the total space of a fibration to $K(\Z, 2)$.
Let $X$ be the fibre of the  fibration $\mu: Y\to K(\Z,2)$.
Then we rationalize this  fibration and study it by minimal models.

In \cite{SY}, J. Sato and the second named  author treated 
the above $c$-symplectic analogue of Sullivan's question,
 considering the fiber space $X$ of the fibration 
$\mu$. 
They asked the following question under the additional condition that the fiber space $X$ is simply connected:
 {\it 
What conditions on $X$ induce
a 
$c$-symplectic structure of the total space $Y_{\mu}$ of the fibration
$$\mu :X \to  Y_{\mu}\overset{p}{\to} K(\Z ,2).\ \ \ \ \ \ \ \ \ \ \ (*)$$}
\noindent
They call such a space $X$ \emph{pre-c-symplectic} (i.e. pre-cohomologically symplectic). 
For example they gave necessary and sufficient conditions for that the product $S^{k_1} \times S^{k_2} \times \cdots \times S^{k_n}$ of odd-dimensional spheres is pre-c-symplectic. 
In this paper, we 
furthermore consider the following
\begin{pr} {\it For a simply connected space $X$, how can  we classify c-symplectic   spaces $Y_{\mu}$ in 
 $(*)$  ?}
 \end{pr}
 When $S^1$ acts on $X$,
 we have a Borel fibration $(*)$  with $K(\Z , 2)=BS^1$.
In our case, it  is always a Borel fibration of a  free $S^1$-action
on a finite $CW$-complex $X'$ in the rational homotopy type of $X$ \cite[Proposition 4.2]{H}.
 In \cite{P}, 
 V. Puppe gives a classification of $S^1$-fibrations on a certain space $X$ having fixed points 
 via 
the  algebraic deformation theory of M. Gerstenhaber
over an algebraically closed field $\K$.
This classification gives a Hasse diagram (e.g. see \cite[page 350]{P}).
However, in the case of almost free $S^1$-actions, which have empty fixed points,
we cannot give such a classification using  the  algebraic deformation theory of Gerstenhaber.
Thus we cannot get such a Hasse diagram. 
In this paper, 
we can get a classification  of the total spaces $Y_{\mu}$
by using the groups of 
$\K$-homotopy classes of $\K$-homotopy self-equivalences,
by which we can get a poset denoted as ${\mathcal P}_{\K}(X)$ via the inclusions of the subgroups
 over a field of characteristic zero $\K$.
See  \cite{Y2} for some examples of such a poset of free $S^1$-actions on same rational types with $X$
when $\K=\Q$. Refer to \cite{F} for the other  general  classification 
given by  a  $\K$-deformation of Sullivan model.

In \S 2 
we recall $\K$-Sullivan models according to \cite{BM}.
In \S 3 
we define the \emph{``c-symplectic poset structure"} ${\mathcal P}_{\K}(X)$ for $X$ over $\K$ 
and a $\K$-homotopical  invariant,
the {\it c-symplectic depth of $X$ over $\K$},  denoted by   c-s-${\rm depth}_{\K}(X)$
which is defined 
by adding  $1$ to the height or the length of the Hasse diagram of the poset ${\mathcal P}_{\K}(X)$.
We give some results in \S 3 and their proofs are given in \S 4.
In \S 5 
we 
treat ${\mathcal P}_{\K}(X)$ mainly  in the   case that spaces
$X$ have the rational homotopy types of   products of  odd-spheres.

\section{ $\K$-Sullivan models } 

Let  $X_{\Q}$ and $f_{\Q}$ be    the rationalizations  \cite{HMR} of a simply connected 
space $X$ with finite rational cohomology
and a map $f$, respectively.
Let $M(X)=(\Lambda {V},d)$ be the Sullivan model of $X$.
  It is a freely generated  $\Q$-commutative differential graded algebra ($\Q$-DGA)
 with a $\Q$-graded vector space $V=\bigoplus_{i\geq 2}V^i$
 where $\dim V^i<\infty$ and a decomposable differential; i.e., 
$d(V^i) \subset (\Lambda^+{V} \cdot \Lambda^+{V})^{i+1}$ and $d \circ d=0$.
 Here  $\Lambda^+{V}$ is 
 the ideal of $\Lambda{V}$ generated by elements of positive degree. 
Denote the degree of a homogeneous element $x$ of a graded algebra 
by $|{x}|$.
Then  $xy=(-1)^{|{x}||{y}|}yx$ and $d(xy)=d(x)y+(-1)^{|{x}|}xd(y)$. 
Note that  $M(X)$ determines the rational homotopy type of $X$.
In particular,  $H^*(\Lambda {V},d)\cong H^*(X;\Q )$
and $V^i\cong Hom(\pi_i(X),\Q)$.
Refer to  \cite{FHT} for details.

Let $\K$ be a field of characteristic zero (then $\Q \subset \K$).
The $\K$-minimal model of a space $X$ is $ M(X_{\K}):= (\Lambda V_{\K},d_{\K})$
where $\Lambda V_{\K}:=\Lambda V\otimes \K$
and $d_{\K}$ is the $\K$-extension of $d$.
We say that $X$ and $Y$ have the same $\K$-homotopy type (denote $X_{\K}\simeq Y_{\K}$)
 when there is a $\K$-DGA isomorphism
$ (\Lambda V_{\K},d_{\K})\cong  (\Lambda U_{\K},d_{\K}')$
for $M(Y)=(\Lambda U,d')$.
Refer to \cite[Definition 1]{BM}.
Notice that, for $\K_1\supset \K_2$,
$ (\Lambda V_{\K_1},d_{\K_1})\cong (\Lambda U_{\K_1},d'_{\K_1})$
if $ (\Lambda V_{\K_2},d_{\K_2})\cong (\Lambda U_{\K_2},d'_{\K_2})$
(\cite{BM},\cite{MS}).\\

The $\K$-model of the above fibration $(*)$  is given as the $\K$-relative model
$$ (\K [t],0)\to (\K [t]\otimes \Lambda V_{\K},D_{\K})\overset{p_t}{\to} (\Lambda V_{\K},d_{\K})=M(X_{\K})\ \ \ (**)$$
where $|t|=2$ and $ (\K [t],0)$ is the $\K$-model of $K(\Z , 2)$.


%
\section{ C-symplecic poset structure} 


Let  ${\mathcal E}(X_{\Q})$ be 
 the group of homotopy classes  of (unpointed) homotopy 
self-equivalences of a rationalized space  $X_{\Q}$
(e.g. \cite{A}, \cite{AL}, \cite{Maru2}, \cite{Maru}, \cite{Pi}).  Let $autM$ be the group of $\K$-DGA-autmorphisms of a $\K$-DGA $M$. 
Denote $f\sim_{\K}g$ when two maps $f$ and $g$ of $autM$
are $\K$-DGA-homotopic; i.e.,
there is a $\K$-DGA-map $H:M\to M\otimes \Lambda (s,ds)_{\K}$ such that  $|s|=0$ and $|ds|=1$
with $H\mid_{s=0}=f$ and    $H\mid_{s=1}=g$.
The group of $\K$-DGA-homotopy classes of $\K$-DGA-autmorphisms of 
$M(X_{\K})= (\Lambda V_{\K},d_{\K})$
$$Aut(\Lambda V_{\K},d_{\K})=aut (\Lambda V_{\K},d_{\K})/\sim_{\K} $$
is denoted by   ${\mathcal E}(X_{\K})$ and it is equal to $aut (\Lambda V_{\K},d_{\K})/I $ 
for the {subgroup $I$  of inner \ automorphisms} of $ (\Lambda V_{\K},d_{\K})$ \cite[Proposition 6.5]{Su}.

For a fibration $\mu$ 
(see the above $(*)$) where $Y=Y_{\mu}$ is c-symplectic,  
let ${\mathcal E}(p_{\Q} )$  denote
the 
group of (unpointed) fibrewise self-equivalences 
$f_{\Q}:Y_{\Q} \to Y_{\Q}$ of the rationalized  fibration
$p_{\Q}:Y_{\Q}\to K({\Z},2)_{\Q}=K({\Q},2)$ of $(*)$. 
Thus $p_{\Q}\circ f_{\Q}=p_{\Q}$.
Futhermore,  let ${\mathcal E}(p_{\K} )$ denote  
the 
group of fibrewise self-equivalences $f_{\K}$ of 
$p_{\K}:Y_{\K}\to K({\Z},2)_{\K}=:K({\K},2)$. 
Thus $p_{\K}\circ f_{\K}=p_{\K}$ in
$$\xymatrix{
X_{\K}\ar[r]\ar[d]_{\bar{f_{\K}}}& Y_{\K}\ar[r]^{p_{\K}}\ar[d]^{f_{\K}}& K(\K ,2)\ar@{=}[d]\\
X_{\K}\ar[r]& Y_{\K}\ar[r]^{p_{\K}}& K(\K ,2),\\
}$$
where ${\bar{f_{\K}}}$ is the induced map of $f_{\K}$.
 Let ${\mathcal E}_{\mu}(X_{\K})$ denote
the image of the natural homomorphism induced by fibre restrictions
$$F_{\mu_{\K}}:{\mathcal E}(p_{\K} )\to {\mathcal E}(X_{\K})$$
with $F_{\mu_{\K}}(f_{\K})=\bar{f_{\K}}$. 
 We let
$ Aut_t(\K [t]\otimes \Lambda V_{\K},D_{\K})$ 
denote
the groups of $\K$-DGA-homotopy classes of $\K$-DGA-autmorphisms 
$f$ of $(\K [t]\otimes \Lambda V_{\K},D_{\K})$ in the above $(**)$ 
with $f(t)=t$.
Then $${\mathcal E}(p_{\K})=Aut_t(\K [t]\otimes \Lambda V_{\K},D_{\K})$$
and  ${F_{\mu}}_{\K}:{\mathcal E}(p_{\K} )\to {\mathcal E}(X_{\K})$ is equivalent to the restriction map 
$$F'_{\mu}:Aut_t(\K[t]\otimes \Lambda V_{\K},D_{\K})\to Aut (\Lambda V_{\K},d_{\K})$$
with 
$F'_{\mu}(f)(v)=p_t(f(v))$
for $v\in V$ and $p_t$ in $(**)$. 
Then $${\mathcal E}_{\mu}(X_{\K})=\operatorname{Im}F'_{\mu}.$$
Refer to \cite[Proposition 2.2]{ShY} (also see \cite{FT}, \cite{FLS}).

\begin{rem}
Recall that, for a fibration $X\to E\to B$, 
D. Gottlieb posed the problem \cite[\S 5]{Go}:
{\it Which homotopy equivalences of $X$ into itself can be extended to fibre homotopy equivalences 
of $E$ into itself ?}  
 In our case, we can propose a $\K$-realization problem on c-symplectic spaces:
{\it For which  subgroup $G$ of ${\mathcal E}(X_{\K})$
does or does not there  exist  a fibration $\mu$ such that ${\mathcal E}_{\mu}(X_{\K})=G$ ? }  
\end{rem}

\begin{lem}\label{equi}
If $\mu\cong_{\K} \mu'$ (over $K(\K ,2)$), then 
${\mathcal E}_{\mu}(X_{\K})$ and ${\mathcal E}_{\mu'}(X_{\K})$
are naturally identified 
in $ {\mathcal E}(X_{\K})$.
\end{lem}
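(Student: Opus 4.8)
The plan is to translate the hypothesis $\mu\cong_{\K}\mu'$ into the language of $\K$-relative models and then transport automorphism groups along the resulting isomorphism. An isomorphism of fibrations over $K(\K,2)$ means, at the level of the models $(**)$, that there is a $\K$-DGA isomorphism
$$\Phi:(\K[t]\otimes\Lambda V_{\K},D_{\K})\xrightarrow{\ \cong\ }(\K[t]\otimes\Lambda V_{\K},D'_{\K})$$
with $\Phi(t)=t$ (this is exactly the condition that $\Phi$ lies over the identity of the base model $(\K[t],0)$). First I would record that such a $\Phi$ induces a group isomorphism
$$\Phi_*:Aut_t(\K[t]\otimes\Lambda V_{\K},D_{\K})\longrightarrow Aut_t(\K[t]\otimes\Lambda V_{\K},D'_{\K}),\qquad f\longmapsto \Phi\circ f\circ\Phi^{-1},$$
which is well defined on $\K$-DGA-homotopy classes (conjugation carries a homotopy $H:M\to M\otimes\Lambda(s,ds)_{\K}$ to $(\Phi\otimes\mathrm{id})\circ H\circ\Phi^{-1}$, preserving the endpoint conditions) and preserves the condition $f(t)=t$ since $\Phi(t)=t$. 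By the identification ${\mathcal E}(p_{\K})=Aut_t(\K[t]\otimes\Lambda V_{\K},D_{\K})$ recalled in the excerpt, this is an isomorphism ${\mathcal E}(p_{\K})\cong{\mathcal E}(p'_{\K})$.

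Next I would check that $\Phi_*$ is compatible with the fibre-restriction maps $F'_{\mu}$ and $F'_{\mu'}$. Write $\bar\Phi:(\Lambda V_{\K},d_{\K})\to(\Lambda V_{\K},d_{\K})$ for the induced map on the fibre, i.e. $\bar\Phi(v)=p_t(\Phi(v))$; since $\Phi$ is an isomorphism fixing $t$, one checks $\bar\Phi$ is a $\K$-DGA automorphism of the fibre model, and from $p_t\circ\Phi=\bar\Phi\circ p_t$ (both models having the same underlying fibre $\Lambda V_{\K}$, with $d_{\K}$ recovered from $D_{\K}$ and from $D'_{\K}$ via $p_t$) one gets, for $v\in V$,
$$F'_{\mu'}(\Phi_*f)(v)=p_t(\Phi f\Phi^{-1}(v))=\bar\Phi\bigl(p_t(f\Phi^{-1}(v))\bigr)=\bar\Phi\bigl(F'_{\mu}(f)(\bar\Phi^{-1}(v))\bigr).$$
Thus $F'_{\mu'}\circ\Phi_*=c_{\bar\Phi}\circ F'_{\mu}$, where $c_{\bar\Phi}$ is conjugation by $\bar\Phi$ on $Aut(\Lambda V_{\K},d_{\K})$; passing to $\K$-DGA-homotopy classes modulo inner automorphisms, $c_{\bar\Phi}$ descends to an (inner, hence in particular well-defined) automorphism of ${\mathcal E}(X_{\K})$ fixing the image subgroup. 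Taking images gives
$${\mathcal E}_{\mu'}(X_{\K})=\operatorname{Im}F'_{\mu'}=c_{\bar\Phi}\bigl(\operatorname{Im}F'_{\mu}\bigr)={\mathcal E}_{\mu}(X_{\K})$$
as subgroups of ${\mathcal E}(X_{\K})$, which is the asserted natural identification.

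The routine parts are the verifications that $\Phi_*$ and $c_{\bar\Phi}$ are well defined on homotopy classes and that conjugation respects inner automorphisms; these are bookkeeping on the definitions recalled in \S 3. The one point that genuinely needs care — and which I expect to be the main obstacle — is the passage from "$\mu\cong_{\K}\mu'$ over $K(\K,2)$" to the existence of a model isomorphism $\Phi$ with $\Phi(t)=t$: an a priori isomorphism over the base fixes $t$ only up to a unit scalar and possibly up to a $D_{\K}$-coboundary in degree $2$, so one must first normalize $\Phi$ (rescale to make $\Phi(t)$ have coefficient $1$, and absorb any coboundary part by composing with a suitable automorphism of the total model fixing the base) before the conjugation argument applies. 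I would isolate this normalization as a short lemma, after which the identification above is immediate.
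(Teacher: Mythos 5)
Your argument follows essentially the same route as the paper: take a model isomorphism $\psi$ over $(\K [t],0)$, conjugate by it to get an isomorphism $ad_{\psi}$ of the groups $Aut_t$, observe that this commutes with the restriction maps to $Aut(\Lambda V_{\K},d_{\K})$, and conclude that the images correspond under $ad_{\overline{\psi}}$. Your normalization remark about $\Phi(t)$ is reasonable extra care; the paper sidesteps it by taking ``isomorphism over $(\K [t],0)$'' (i.e.\ $\psi(t)=t$) as the definition of $\mu\cong_{\K}\mu'$ over $K(\K ,2)$.

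One step does not hold as written, however: the final equality $c_{\overline{\Phi}}\bigl(\operatorname{Im}F'_{\mu}\bigr)=\operatorname{Im}F'_{\mu}$, which you justify by saying that $c_{\overline{\Phi}}$ is inner. Innerness makes $c_{\overline{\Phi}}$ a well-defined automorphism of ${\mathcal E}(X_{\K})$, but it does not make it fix the subgroup $\operatorname{Im}F'_{\mu}$; that would require $\overline{\Phi}$ to normalize $\operatorname{Im}F'_{\mu}$, for which there is no reason in general (it happens to be automatic in the paper's examples only because the relevant groups are abelian there). What your computation actually establishes is ${\mathcal E}_{\mu'}(X_{\K})=\overline{\Phi}\,{\mathcal E}_{\mu}(X_{\K})\,\overline{\Phi}^{-1}$, i.e.\ the two subgroups are conjugate via the fibre automorphism induced by the model isomorphism --- and this conjugation isomorphism $ad_{\overline{\Phi}}$ is exactly what the lemma (and the paper's proof) means by ``naturally identified''; literal equality of subgroups is neither claimed nor needed. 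Dropping the last ``$={\mathcal E}_{\mu}(X_{\K})$'' leaves a correct proof.
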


We are interested in  the set $S:=\{{\mathcal E}_{\mu}(X_{\K}) \}_{\mu\in I}$
of subgroups  of ${\mathcal E}(X_{\K})$
for the set $I$ of isomorphism classes of $\mu_{\K}$ in $(*)$ where $Y_{\mu}$ are c-symplectic.

\begin{defn}
We define an equivalence relation 
 $\mu\sim \tau$ 
for $\mu$ and $\tau$ of $I$
if   ${\mathcal E}_{\mu}(X_{\K})= {\mathcal E}_{\tau}(X_{\K})$
in $ {\mathcal E}(X_{\K})$.
For equivalence classes $[\mu]$ and $[\tau]$,
put  $[\mu] \leq [\tau ]$
when   there is an inclusion 
$i: {\mathcal E}_{\mu}(X_{\K})\hookrightarrow {\mathcal E}_{\tau}(X_{\K})$
in ${\mathcal E}(X_{\K})$.
We define
$${\mathcal P}_{\K}(X):=(  S/\sim, \leq ) =( \{ [\mu]\} ,\leq ),$$
which is called the {\it c-symplectic poset   structure} on  $X$
over $\K$.
\end{defn}





\begin{defn}
For a field $\K$ of characteristic zero, we define the c-symplectic depth (abbr., c-s-depth) of a simply connected space $X$
over $\K$ 
by
$$\mbox{c-s-}{\rm depth}_{\K}(X)=\max \Bigl \{ n \ \large{\mid} \  [\mu_{i_1}]> [\mu_{i_2}] >\cdots >[\mu_{i_n}]\ 
\mbox{ for }\ [\mu_i]\in {\mathcal P}_{\K}(X) \Bigr \}.$$
\end{defn}

Here we note that $$\mbox{c-s-}{\rm depth}_{\K}(X)\ =
\mbox{ the height of }{\mathcal P}_{\K}(X)\ +1.$$
Suppose that $\mu_1\cong_{\K} \mu'_1$
 and $\mu_2\cong_{\K} \mu'_2$.
 If ${\mathcal E}_{\mu_1}(X_{\K})\subset {\mathcal E}_{\mu_2}(X_{\K})$,
 then we can regard  as ${\mathcal E}_{\mu'_1}(X_{\K})\subset {\mathcal E}_{\mu'_2}(X_{\K})$
from Lemma \ref{equi}. Thus we have
\begin{lem}
The non-negative integer c-s-${\rm depth}_{\K}(X)$ is a $\K$-homotopy invariant.
\end{lem}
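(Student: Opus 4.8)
The plan is to reduce the claimed invariance of $\mbox{c-s-}{\rm depth}_{\K}(X)$ to the already-established Lemma~\ref{equi}, so the argument is essentially a bookkeeping argument showing that a $\K$-homotopy equivalence $X_{\K}\simeq X'_{\K}$ induces a poset isomorphism ${\mathcal P}_{\K}(X)\cong{\mathcal P}_{\K}(X')$. First I would fix a $\K$-DGA isomorphism $\varphi\colon(\Lambda V_{\K},d_{\K})\xrightarrow{\ \cong\ }(\Lambda U_{\K},d'_{\K})$ coming from $X_{\K}\simeq X'_{\K}$. Conjugation $g\mapsto \varphi g\varphi^{-1}$ descends to a group isomorphism $\Phi\colon{\mathcal E}(X_{\K})\xrightarrow{\cong}{\mathcal E}(X'_{\K})$, since it carries inner automorphisms to inner automorphisms and respects $\K$-DGA-homotopy. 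Group isomorphisms send subgroups to subgroups and preserve inclusions, so to finish the proof it suffices to show that $\Phi$ carries the distinguished family $S=\{{\mathcal E}_{\mu}(X_{\K})\}$ bijectively onto $S'=\{{\mathcal E}_{\nu}(X'_{\K})\}$.

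The key step is therefore: given a fibration $\mu\colon X\to Y_{\mu}\xrightarrow{p} K(\Z,2)$ with $Y_{\mu}$ c-symplectic, produce a fibration $\nu$ on the $X'$-side with $Y_{\nu}$ c-symplectic and $\Phi({\mathcal E}_{\mu}(X_{\K}))={\mathcal E}_{\nu}(X'_{\K})$. I would do this at the level of $\K$-relative models $(**)$. The isomorphism $\varphi$ extends to a $\K[t]$-linear isomorphism $\tilde\varphi\colon(\K[t]\otimes\Lambda V_{\K},D_{\K})\to(\K[t]\otimes\Lambda U_{\K},D'_{\K})$ by choosing the target differential $D'_{\K}$ to make $\tilde\varphi$ a chain map — concretely $D'_{\K}:=\tilde\varphi\circ D_{\K}\circ\tilde\varphi^{-1}$ — with $\tilde\varphi(t)=t$; this defines the relative model of $\nu$, and $p_t$ intertwines appropriately. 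Because $\tilde\varphi$ restricts to $\varphi$ on fibres and is the identity on the base, it conjugates $Aut_t(\K[t]\otimes\Lambda V_{\K},D_{\K})$ onto $Aut_t(\K[t]\otimes\Lambda U_{\K},D'_{\K})$ compatibly with the restriction maps $F'_{\mu}$ and $F'_{\nu}$; taking images gives $\Phi(\operatorname{Im}F'_{\mu})=\operatorname{Im}F'_{\nu}$, i.e. $\Phi({\mathcal E}_{\mu}(X_{\K}))={\mathcal E}_{\nu}(X'_{\K})$. One must also check that $Y_{\nu}$ is again c-symplectic: a c-symplectic class is a degree-$2$ cohomology class of the total space whose $n$-th power is a top class, and $\tilde\varphi$ induces a ring isomorphism $H^*(\K[t]\otimes\Lambda V_{\K},D_{\K})\cong H^*(\K[t]\otimes\Lambda U_{\K},D'_{\K})$ over $\K$, which preserves this property after noting the top-class condition is detected rationally and is stable under $\K$-coefficient extension. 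Running the same construction with $\varphi^{-1}$ shows $\Phi$ maps $S$ onto $S'$ and conversely, so $\Phi$ induces a bijection on equivalence classes and preserves the order $\leq$, hence a poset isomorphism ${\mathcal P}_{\K}(X)\cong{\mathcal P}_{\K}(X')$.

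Once we have ${\mathcal P}_{\K}(X)\cong{\mathcal P}_{\K}(X')$ as posets, the height of the Hasse diagram — the supremum of lengths of strictly descending chains $[\mu_{i_1}]>\cdots>[\mu_{i_n}]$ — is a purely order-theoretic quantity, hence equal on both sides; adding $1$ gives $\mbox{c-s-}{\rm depth}_{\K}(X)=\mbox{c-s-}{\rm depth}_{\K}(X')$. Finiteness (so that the max is well defined and the invariant is a genuine non-negative integer) follows from the finiteness assertion already recorded in the introduction, namely that $\mbox{c-s-}{\rm depth}_{\K}(X)$ is finite, together with finite-dimensionality of $V$; I would simply cite this rather than reprove it. The remark preceding the statement already observes that inclusions transport along $\cong_{\K}$ via Lemma~\ref{equi}, so the genuinely new content is only the systematic packaging into the conjugation isomorphism $\Phi$.

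The main obstacle I anticipate is the c-symplectic bookkeeping in the key step: one must be careful that the map $I\to I'$ between sets of isomorphism classes of fibrations with c-symplectic total space is well defined, i.e. that a $\K$-isomorphism of relative models over $K(\K,2)$ on the source maps to one on the target (so that Lemma~\ref{equi} applies on both sides and the construction descends to equivalence classes), and that the c-symplectic condition — which is phrased over $\Q$ for the integral/rational total space, not intrinsically for the $\K$-model — is genuinely preserved; the cleanest route is to observe that "c-symplectic" for $Y_\mu$ depends only on the rational homotopy type of the fibration and hence on the $\Q$-relative model, and that $\varphi$ being defined over $\K$ restricts compatibly, but spelling out this descent from $\K$-data to $\Q$-data without circularity is the delicate point. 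Everything else — conjugation preserving inner automorphisms and $\K$-DGA-homotopy, group isomorphisms preserving the subgroup lattice, order theory of Hasse heights — is routine.
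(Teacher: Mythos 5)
Your proposal is correct and takes essentially the same route as the paper: the paper's own justification is just the two sentences preceding the lemma, which invoke Lemma~\ref{equi} (whose proof is exactly your conjugation map $ad_{\psi}$ on relative models) to transport inclusions of the subgroups ${\mathcal E}_{\mu}(X_{\K})$ across $\K$-isomorphisms, and then conclude. You have simply spelled out in full the poset isomorphism ${\mathcal P}_{\K}(X)\cong{\mathcal P}_{\K}(X')$ that the paper leaves implicit, including the c-symplectic bookkeeping the paper does not discuss.
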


\begin{prop}\label{finite}
When $\dim H^*(X;\Q )<\infty$, 
c-s-${\rm depth}_{\K}(X)<\infty $
for all $\K$.
\end{prop}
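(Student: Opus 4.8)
The plan is to place the poset ${\mathcal P}_{\K}(X)$ inside the lattice of closed subgroups of a single linear algebraic group, and then to bound its height by a dimension count together with a uniform bound on the orders of the component groups of the subgroups that occur.

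Since $X$ is simply connected and $\dim H^*(X;\Q)<\infty$, the group ${\mathcal E}(X_{\K})=Aut(\Lambda V_{\K},d_{\K})$ is the group of $\K$-rational points of a linear algebraic group $\mathbf{E}$ over $\K$ (Sullivan's description, cf.\ \cite{Su}); set $N=\dim\mathbf{E}<\infty$. When $Y_\mu$ is c-symplectic it is a Poincar\'e duality space, so $\dim H^*(Y_\mu;\Q)<\infty$ as well, and therefore $Aut(\K[t]\otimes\Lambda V_{\K},D_{\K})$ is likewise the $\K$-points of a linear algebraic group; the condition $\phi(t)=t$ is Zariski closed, so ${\mathcal E}(p_{\K})=Aut_t(\K[t]\otimes\Lambda V_{\K},D_{\K})$ is a closed subgroup of it, and the restriction map $F'_{\mu}:Aut_t(\K[t]\otimes\Lambda V_{\K},D_{\K})\to Aut(\Lambda V_{\K},d_{\K})$ is a homomorphism of algebraic groups. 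Hence ${\mathcal E}_\mu(X_{\K})=\operatorname{Im}F'_{\mu}$ is a closed algebraic subgroup of $\mathbf{E}$, and a chain $[\mu_{i_1}]>\cdots>[\mu_{i_n}]$ in ${\mathcal P}_{\K}(X)$ is exactly a strictly descending chain ${\mathcal E}_{\mu_{i_1}}(X_{\K})\supsetneq\cdots\supsetneq{\mathcal E}_{\mu_{i_n}}(X_{\K})$ of closed subgroups of $\mathbf{E}$.

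Along such a chain the dimensions $\dim{\mathcal E}_{\mu_{i_j}}(X_{\K})$ are non-increasing and can strictly decrease at most $N$ times; on a maximal block of indices on which the dimension is constant, all the groups in the block have one and the same identity component, so the block embeds as a chain of subgroups into the finite group $\pi_0$ of its first member. Thus everything reduces to bounding $\bigl|\pi_0({\mathcal E}_\mu(X_{\K}))\bigr|$ by a constant $c$ depending only on $X$ and $\K$: granting this, each such block has length $\leq 1+\log_2 c$, whence $n\leq(N+1)(1+\log_2 c)$ and $\mbox{c-s-}{\rm depth}_{\K}(X)<\infty$ (the statement being vacuous if ${\mathcal P}_{\K}(X)=\emptyset$). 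For this bound I would use that ${\mathcal E}_\mu(X_{\K})$ is the isotropy subgroup of the $\K$-homotopy class of $\mu$ for the natural algebraic action of $\mathbf{E}$ on the set of equivalence classes of perturbations $D_{\K}$ of the product differential on $\K[t]\otimes\Lambda V_{\K}$ --- equivalently, on $\K$-homotopy classes of fibrations with fibre $X_{\K}$ over $K(\K,2)$. Since every c-symplectic $Y_\mu$ has the same (finite) formal dimension --- rationally it is the orbit space of a free $S^1$-action on a finite complex in the rational type of $X$, \cite[Proposition 4.2]{H} --- only finitely many of the structure constants of $D_{\K}$ are relevant for the c-symplectic perturbations, so one may replace this parameter set by the $\K$-points of a scheme of finite type over $\K$ carrying an algebraic $\mathbf{E}$-action; over a finite-type base the stabilizer group schemes form a constructible family, so their numbers of connected components are bounded, which gives $c$.

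The decisive difficulty is precisely this last reduction: showing that the c-symplectic condition genuinely confines the classifying data of $\mu$ to a parameter space of finite type, and then harvesting from it the constructibility (hence boundedness) of the stabilizers. This is the point at which the hypothesis $\dim H^*(X;\Q)<\infty$ is essential --- without it even the ambient group ${\mathcal E}(X_{\K})$ may fail to be algebraic, and one expects the component groups $\pi_0({\mathcal E}_\mu(X_{\K}))$ to be unbounded (for instance, cyclic subgroups of unbounded order in a torus sitting inside ${\mathcal E}(X_{\K})$, realized by fibrations built from arbitrarily high powers of a degree-$2$ class), so that ${\mathcal P}_{\K}(X)$ would carry chains of unbounded, though finite, length. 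Apart from that reduction, the argument is the standard dimension/component bookkeeping for closed subgroups of a linear algebraic group.
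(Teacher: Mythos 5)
Your overall route coincides with the paper's: both arguments identify a chain $[\mu_{i_1}]>\cdots>[\mu_{i_n}]$ in ${\mathcal P}_{\K}(X)$ with a strictly descending chain of algebraic subgroups of a single linear algebraic group obtained from $aut(\Lambda V_{\K},d_{\K})$, using $\dim H^*(X;\Q)<\infty$ to cut down to $aut(\Lambda V^{\leq n}_{\K},d_{\K}\mid_{V^{\leq n}_{\K}})$, a closed matrix group inside some $GL(N,\K)$. Where you go further is in recognizing that one cannot simply conclude from Noetherianity that such a chain has length bounded by $N$: the dimension of the subgroups can stay constant while the finite component group shrinks, and this is not a hypothetical worry --- in Theorem \ref{CP} the subgroups ${\mathcal E}_{\mu}(X_{\K})\cong \Z_{N_{\mu}}$ form chains of length $c(n+1)$ inside a torus of dimension $2$. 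Your bookkeeping (at most $N$ strict drops in dimension; on each constant-dimension block the identity components coincide, so the block is a chain of subgroups of a fixed finite group and has length at most $1+\log_2 c$) is correct and is genuinely sharper than the one-line appeal to the descending chain condition that the paper records.

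The problem is that the decisive step --- a uniform bound $c$ on $\bigl|\pi_0({\mathcal E}_{\mu}(X_{\K}))\bigr|$ as $\mu$ ranges over all c-symplectic fibrations --- is only sketched, and you say so yourself. You propose to realize the relevant perturbations $D_{\K}$ as the $\K$-points of a scheme of finite type with an algebraic ${\mathcal E}(X_{\K})$-action and to invoke constructibility of the stabilizer family, but none of this is carried out: you do not exhibit the parameter scheme (the fixed formal dimension $fd(Y_{\mu})=fd(X)-1$ and degree constraints make this plausible, but it still needs an argument when $V$ is not finite-dimensional), you do not verify that ${\mathcal E}_{\mu}(X_{\K})$ is literally the stabilizer of the corresponding point rather than a possibly smaller image of the restriction map $F'_{\mu}$, and you do not prove the boundedness of component groups in a constructible family that the whole estimate rests on. As a standalone proof the argument therefore has a genuine, self-acknowledged gap at exactly the point where all the work lies. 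To be fair, the paper's own proof elides the same issue by asserting that ``the integer $m$ is bounded by $N$''; your outline, if completed, would repair that argument rather than merely reproduce it.
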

If $X$ is not pre-c-symplectic \cite{SY}, then ${\mathcal P}_{\K}(X)= \emptyset$, in which case we set c-s-${\rm depth}_{\K}(X):=0.$
If $X$ is pre-c-symplectic, then ${\mathcal P}_{\K}(X) \not = \emptyset$, whence clearly we have that c-s-${\rm depth}_{\K}(X) \geqq 1$.
Thus it is obvious that 
$X$ is pre-c-symplectic if and only if c-s-${\rm depth}_{\K}(X)>0$ for any $\K$.
\begin{rem} It is expected that  c-s-${\rm depth}_{\K}(X)$ measures the  abundance of c-symplectic structures
associated to a simply connected space $X$.
\end{rem}



\begin{prop}
If $X$ is pre-$c$-symplectic, then the formal dimension $fd(X)$ of $X$ is odd, where $fd(X):=\max \{n\mid  H^n(X;\Q )\neq 0\}$. 
\end{prop}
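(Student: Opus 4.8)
The plan is to exploit the c-symplectic structure of the total space $Y_\mu$ together with the cohomology Gysin sequence of the $S^1$-fibration $(*)$. Since $X$ is pre-c-symplectic, there exists a fibration $\mu: X\to Y_\mu\overset{p}{\to}K(\Z,2)$ with $Y_\mu$ a c-symplectic Poincar\'e duality space, and by $[H,\ \text{Proposition }4.2]$ (cited in the introduction) this is the Borel fibration of a free $S^1$-action on a finite $CW$-complex $X'$ in the rational homotopy type of $X$; in particular $\dim H^*(X;\Q)<\infty$ so $fd(X)$ is a well-defined integer $N$. First I would pass to the $\K$-relative model $(**)$ with $\K=\Q$, i.e. the Cartan–Koszul type complex $(\Q[t]\otimes \Lambda V_\Q, D)$ with $|t|=2$, whose cohomology is $H^*(Y_\mu;\Q)$. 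Because $Y_\mu$ has a c-symplectic class $\omega\in H^2(Y_\mu;\Q)$, the space $Y_\mu$ is a Poincar\'e duality space of some formal dimension $M$ with $M=2n$ even (the top class is $\omega^n$), and multiplication by $\omega$ gives isomorphisms in the appropriate complementary degrees.

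The key computation is to compare $fd(Y_\mu)$ with $fd(X)$ using the Gysin sequence of the $S^1$-fibration $X\to Y_\mu\to BS^1$, namely the long exact sequence
$$\cdots \to H^{k-2}(Y_\mu;\Q)\overset{\cup e}{\to} H^k(Y_\mu;\Q)\to H^k(X;\Q)\to H^{k-1}(Y_\mu;\Q)\to\cdots,$$
where $e$ is the Euler class; in the model $(**)$ the class $e$ is precisely (the cohomology class of) $t$, and the fibre inclusion induces $p_t$. Since the action is free (empty fixed point set, as emphasized in the introduction), the Borel construction $Y_\mu\simeq X\times_{S^1}ES^1$ has the rational homotopy type of the honest orbit space $X'/S^1$, which is a manifold (or Poincar\'e duality space) of formal dimension $fd(X)-1$. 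Hence $M=fd(Y_\mu)=fd(X)-1$. Combined with the fact that $M=2n$ is even (forced by the existence of a c-symplectic class whose top power $\omega^n$ generates $H^M$), we conclude $fd(X)=M+1=2n+1$ is odd.

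The step I expect to be the main obstacle is justifying $fd(Y_\mu)=fd(X)-1$ rigorously at the level of models rather than hand-waving via the orbit space: one must check that $(\Q[t]\otimes\Lambda V_\Q, D)$ has top nonzero cohomology in degree exactly $fd(X)-1$. This is where the Poincar\'e duality hypothesis on $Y_\mu$ (built into the definition of c-symplectic) does the real work — it forces $H^*(Y_\mu;\Q)$ to be finite-dimensional with a unique top degree $M$, and Poincar\'e duality for $Y_\mu$ together with the Gysin sequence (the map $\cup t: H^{M-1}\to H^{M+1}=0$ being surjective with kernel identified via duality) pins down $M$ in terms of the fibre. An alternative, perhaps cleaner, route is: if $fd(X)=N$ were even, then $Y_\mu$ would have odd formal dimension $N+1$ (using the orbit-space / Gysin argument in the easy direction $fd(Y_\mu)\le fd(X)+1$ and the duality gap), contradicting that a c-symplectic Poincar\'e duality space $Y_\mu$ must have even formal dimension $2n$; so I would phrase the argument as a contradiction, which sidesteps having to compute $fd(Y_\mu)$ on the nose and only requires the parity statement together with the standard fact that an $S^1$-fibration changes the formal dimension by exactly one when the total space and base are Poincar\'e duality spaces.
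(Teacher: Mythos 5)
Your argument is correct and is essentially the paper's: both reduce to the Gysin sequence of the circle bundle $S^1\to X\to Y_{\mu}$ (the fibration one step back in the Barratt--Puppe sequence) together with the fact that the c-symplectic Poincar\'e duality space $Y_{\mu}$ has even formal dimension. The step you flag as the main obstacle is already settled by the exact sequence you wrote down: setting $m=fd(Y_{\mu})$, the outer terms $H^{k}(Y_{\mu};\Q)$ and $H^{k+1}(Y_{\mu};\Q)$ surrounding $H^{k}(X;\Q)\to H^{k-1}(Y_{\mu};\Q)\to H^{k+1}(Y_{\mu};\Q)$ vanish for $k\ge m+1$, giving $H^{m+1}(X;\Q)\cong H^{m}(Y_{\mu};\Q)\neq 0$ and $H^{k}(X;\Q)=0$ for $k>m+1$, so $fd(X)=m+1$ with no appeal to the orbit space or to Poincar\'e duality beyond the finiteness of $m$.
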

\begin{proof} Since $X$ is pre-$c$-symplectic, we have a fibration $\mu :X \to  Y_{\mu}\overset{p}{\to} K(\Z ,2)$. Then   we have a fibration one step back in  the Barratt-Puppe sequence, which is  
 a $\Omega K(\Z,2)=S^1$-fibration over $Y$ with the total space $X$, 
$S^1 \to X \to Y.$
 Let $fd(Y):=m$. Then it follows from the Gysin sequence for cohomology that $H^{m+1}(X;\Q ) \cong H^m(Y;\Q ) \not=0$ and $H^{k+1} (X;\Q )\cong H^k(Y;\Q ) = 0$ for $k>m$, thus we get $fd(X)= m +1 = fd(Y) +1$. Since $fd(Y)=m$ is even, the formal dimension $fd(X)$ of $X$ is odd.
\end{proof}
Note that $fd(X)=\max \{n\mid  H^n(X;\K )\neq 0\}$ for any $\K$
and that if $X$ is a compact manifold, then the formal dimension is the same as the (real) dimension of the manifold.

\begin{cor} If the formal dimension of $X$ is even, then c-s-${\rm depth}_{\K}(X) =0.$
\end{cor}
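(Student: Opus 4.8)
The plan is to reduce the statement immediately to the Proposition just established, namely that if $X$ is pre-$c$-symplectic then $fd(X)$ is odd. First I would take the contrapositive of that Proposition: if $fd(X)$ is even, then $X$ is \emph{not} pre-$c$-symplectic. Then I would invoke the remark made right after Proposition \ref{finite}, which records that when $X$ is not pre-$c$-symplectic one has ${\mathcal P}_{\K}(X)=\emptyset$, and in that case the c-s-depth is set to $0$ by definition. Chaining these two facts gives c-s-${\rm depth}_{\K}(X)=0$ directly.

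Concretely the argument would read: suppose $fd(X)$ is even. By the preceding Proposition, $X$ cannot be pre-$c$-symplectic, since pre-$c$-symplecticity forces $fd(X)$ to be odd. Hence ${\mathcal P}_{\K}(X)=\emptyset$, so there are no equivalence classes $[\mu]$ at all, and by the convention stated in the text c-s-${\rm depth}_{\K}(X)=0$. One small point worth noting explicitly is that $fd(X)$ does not depend on the coefficient field (this is remarked just before the corollary: $fd(X)=\max\{n\mid H^n(X;\K)\neq 0\}$ for any $\K$), so the hypothesis "the formal dimension of $X$ is even" is unambiguous and the conclusion holds uniformly for every $\K$ of characteristic zero.

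There is essentially no obstacle here: the corollary is a formal consequence of the Proposition together with the definitional conventions, so the "hard part" — establishing that pre-$c$-symplecticity constrains the parity of $fd(X)$ via the Gysin sequence of the $S^1$-fibration $S^1\to X\to Y$ — has already been carried out in the proof of the preceding Proposition. The only thing to be careful about is to cite the right convention (the one assigning c-s-${\rm depth}_{\K}(X):=0$ when $X$ is not pre-$c$-symplectic) rather than attempting any independent computation of ${\mathcal P}_{\K}(X)$.
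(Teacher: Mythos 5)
Your argument is correct and is exactly the (implicit) derivation intended by the paper: the corollary is the contrapositive of the preceding Proposition combined with the stated convention that c-s-${\rm depth}_{\K}(X):=0$ when ${\mathcal P}_{\K}(X)=\emptyset$. Nothing further is needed.
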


\begin{cor}\label{examp} 
(1) For any $n>0$, c-s-${\rm depth}_{\K}(S^{2n})=0$.\\
(2) If $X$ is a c-symplectic space, then  c-s-${\rm depth}_{\K}(X)=0$.\\
(3) If  c-s-${\rm depth}_{\K}(G/H)>0$ for a homogeneous space $G/H$, ${\rm rank }G>{\rm rank} H$.\\
(4) If c-s-${\rm depth}_{\K}(X)>0$ and c-s-${\rm depth}_{\K}(Y)>0$, then 
c-s-${\rm depth}_{\K}(X\times Y)=0$.
\end{cor}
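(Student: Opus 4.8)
The plan is to prove all four parts by reducing to statements about formal dimension and the preceding corollary. For part (1), note that $S^{2n}$ has formal dimension $2n$, which is even, so by the Corollary just proved (``if the formal dimension of $X$ is even, then c-s-${\rm depth}_{\K}(X)=0$'') we immediately get c-s-${\rm depth}_{\K}(S^{2n})=0$. For part (2), suppose $X$ is already c-symplectic and of odd formal dimension (otherwise part apply the even-dimension corollary directly); I would argue that a c-symplectic Poincar\'e duality space has even formal dimension, since it carries $\omega\in H^2(X;\Q)$ with $\omega^n$ a top class, forcing $fd(X)=2n$. Hence $fd(X)$ is even and the even-dimension corollary finishes it. So really parts (1) and (2) are corollaries of the corollary, together with the elementary computation of formal dimensions.

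For part (3), I would use the contrapositive phrasing: c-s-${\rm depth}_{\K}(G/H)>0$ means $G/H$ is pre-c-symplectic, so by the Proposition its formal dimension $fd(G/H)$ is odd. Now I invoke the standard rational-homotopy fact that for a homogeneous space of compact connected Lie groups with ${\rm rank}\,G=\ell$ and ${\rm rank}\,H=r$, the space $G/H$ has a pure Sullivan model and its formal dimension has the same parity as $\ell-r$ when $G/H$ is non-equal-rank (indeed when ${\rm rank}\,G={\rm rank}\,H$ the space $G/H$ is formal with even-degree cohomology concentrated, hence even formal dimension). So odd formal dimension forces ${\rm rank}\,G\ne{\rm rank}\,H$, and since one always has ${\rm rank}\,H\le{\rm rank}\,G$ for $H\subset G$, we conclude ${\rm rank}\,G>{\rm rank}\,H$. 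I should be careful to state exactly which category of homogeneous spaces is meant (compact connected Lie groups), and the key input is the parity of $fd(G/H)$ in terms of the rank difference.

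For part (4), the statement as written asserts that a product of two pre-c-symplectic spaces has c-s-depth zero. The idea is that c-s-${\rm depth}_{\K}(X)>0$ and c-s-${\rm depth}_{\K}(Y)>0$ force both $fd(X)$ and $fd(Y)$ to be odd by the Proposition; then $fd(X\times Y)=fd(X)+fd(Y)$ is even (sum of two odd numbers), so by the even-dimension corollary c-s-${\rm depth}_{\K}(X\times Y)=0$. The only subtlety is justifying $fd(X\times Y)=fd(X)+fd(Y)$, which follows from the K\"unneth formula over a field together with finite-dimensionality of the rational cohomologies.

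The main obstacle will be part (3): unlike the other three parts, which are pure parity bookkeeping once one knows $fd(X\times Y)=fd(X)+fd(Y)$ and that c-symplectic spaces have even formal dimension, part (3) requires the genuinely nontrivial input that $fd(G/H)$ has the parity of ${\rm rank}\,G-{\rm rank}\,H$. I would either cite the standard structure theory of homogeneous spaces via pure models (the Cartan model $(\Lambda(x_1,\dots,x_r)\otimes \Lambda(y_1,\dots,y_\ell),d)$ and the fact that equal-rank homogeneous spaces are formal with top class in even degree equal to $\sum(\deg y_i-1)-\sum(\deg x_j)$, whose parity tracks $\ell-r$), or reduce directly to showing that equal rank implies $H^{\rm odd}(G/H;\Q)=0$ and hence even formal dimension — this last being the cleanest route, since then odd $fd$ immediately excludes equal rank.
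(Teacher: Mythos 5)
Your proposal is correct, and for parts (1), (2) and (4) it is essentially the paper's argument: (1) is the even-formal-dimension corollary applied to $fd(S^{2n})=2n$; (2) uses that a c-symplectic class $\omega$ with $\omega^n$ a top class forces $fd(X)=2n$ even (the paper just cites \cite[Theorem 32.6(i)]{FHT} for this); (4) is exactly the K\"unneth computation $fd(X\times Y)=fd(X)+fd(Y)$ together with the proposition that positive c-s-depth implies odd formal dimension.

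The only divergence is in part (3). Like the paper, you argue the contrapositive and reduce to showing that ${\rm rank}\,G={\rm rank}\,H$ forces $fd(G/H)$ to be even, but you propose to get this from pure Sullivan models or from the equal-rank theorem $H^{\rm odd}(G/H;\Q)=0$. The paper's route is more elementary: taking a common maximal torus $T$, root theory gives that $\dim G-\dim T$ and $\dim H-\dim T$ are both even, so $\dim G/H=\dim G-\dim H$ is even, and $fd(G/H)=\dim G/H$ since $G/H$ is a compact (simply connected, hence orientable) manifold. Your observation that $fd(G/H)\equiv {\rm rank}\,G-{\rm rank}\,H \pmod 2$ is exactly what this root count yields, so your first alternative is the same parity bookkeeping in different clothing; your ``cleanest route'' via $H^{\rm odd}(G/H;\Q)=0$ is also valid but invokes a substantially deeper theorem than the situation requires. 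No gaps in any part.
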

\begin{proof}
(1) $fd(S^{2n})$ is even.\\
(2) The formal dimension of a $c$-symplectic space is even (cf. \cite[page 218, Theorem 32.6(i)]{FHT}).\\
(3) Suppose that ${\rm rank }G={\rm rank} H$.
Let $T$ be a maximal torus for both $G$ and $H$.
Since root theory tells us that 
$\dim G-\dim T$ and $\dim H-\dim T$ are both even, then $\dim G/H=\dim G-\dim H$
is also even.
Hence, c-s-${\rm depth}_{\K}(G/H)=0$.\\
(4) c-s-${\rm depth}_{\K}(X)>0$ and c-s-${\rm depth}_{\K}(Y)>0$ imply that their formal dimensions $fd(X)$ and $fd(Y)$ are both odd. By the K\"unneth theorem we have that 
$H^m(X \times Y, \Q) = \sum_{i+j=m} H^i(X, \Q) \otimes H^j(Y, \Q)$. Hence $fd(X \times Y)= fd(X) + fd(Y)$, thus $fd(X \times Y)$ is even. 
Therefore c-s-${\rm depth}_{\K}(X \times Y)=0.$
\end{proof}
\begin{thm}For any $n>0$, c-s-${\rm depth}_{\K}(S^{2n+1})=1$.
\end{thm}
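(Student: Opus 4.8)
The plan is to show that $S^{2n+1}$ is pre-c-symplectic and that the poset ${\mathcal P}_{\K}(S^{2n+1})$ is a single point. First I would write down the $\K$-minimal model $M(S^{2n+1}_{\K}) = (\Lambda(x), 0)$ with $|x| = 2n+1$, and form the $\K$-relative model $(**)$ of the fibration $(*)$. Since $D_{\K}(t) = 0$ (as $t$ comes from the base) and $D_{\K}(x)$ must be a $D_{\K}$-closed element of degree $2n+2$ in $\K[t]\otimes\Lambda(x)$, the only possibility is $D_{\K}(x) = c\,t^{n+1}$ for some $c \in \K$. The total space $Y_{\mu}$ is c-symplectic precisely when $c \neq 0$: in that case $H^*(Y_{\mu};\K) \cong \K[t]/(t^{n+1})$, the class $t$ is a degree-$2$ class whose $n$-th power $t^n$ generates the top cohomology (in formal dimension $2n$), and $Y_{\mu}$ is a Poincaré duality space. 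When $c = 0$ the model is $(\K[t]\otimes\Lambda(x), 0)$, which has infinite-dimensional cohomology, so $Y_{\mu}$ is not even a Poincaré duality space. Hence $S^{2n+1}$ is pre-c-symplectic, so by the discussion preceding this theorem c-s-${\rm depth}_{\K}(S^{2n+1}) \geq 1$.

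Next I would argue that all the c-symplectic fibrations are equivalent over $K(\K,2)$, so that $I$ has essentially one element and ${\mathcal P}_{\K}(S^{2n+1})$ is a single point, giving c-s-${\rm depth}_{\K}(S^{2n+1}) = 1$. Concretely, given two such models with $D_{\K}(x) = c\,t^{n+1}$ and $D'_{\K}(x) = c'\,t^{n+1}$, $c, c' \neq 0$, the $\K$-DGA map fixing $t$ and sending $x \mapsto (c/c')x$ is an isomorphism $\mu \cong_{\K} \mu'$ over $K(\K,2)$. By Lemma \ref{equi}, the subgroups ${\mathcal E}_{\mu}(S^{2n+1}_{\K})$ and ${\mathcal E}_{\mu'}(S^{2n+1}_{\K})$ are naturally identified in ${\mathcal E}(S^{2n+1}_{\K})$, so $[\mu] = [\mu']$ in ${\mathcal P}_{\K}(S^{2n+1})$. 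Thus $S/\!\sim$ is a one-element set, its Hasse diagram has height $0$, and c-s-${\rm depth}_{\K}(S^{2n+1}) = 0 + 1 = 1$.

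As a sanity check, one can also directly compute ${\mathcal E}_{\mu}(S^{2n+1}_{\K})$ via the restriction map $F'_{\mu}$ of $(**)$: an automorphism of $(\K[t]\otimes\Lambda(x), D_{\K})$ fixing $t$ sends $x$ to $\lambda x + t^n(\text{something})$—but degree forces it to $\lambda x + a\, t^{?}$ with the only available monomial being $\lambda x$ when $2n+1$ is odd, since there is no polynomial in $t$ of odd degree; compatibility with $D_{\K}$ then imposes no extra constraint beyond $\lambda \in \K^{\times}$ (modulo $D_{\K}$-homotopy, which can absorb lower terms). So ${\mathcal E}_{\mu}(S^{2n+1}_{\K}) \cong \K^{\times} \cong {\mathcal E}(S^{2n+1}_{\K})$, independent of the choice of c-symplectic $\mu$, confirming that there is only one equivalence class.

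The main obstacle I anticipate is the bookkeeping for the second and third paragraphs: one must be careful that every c-symplectic fibration over $K(\K,2)$ with fibre $S^{2n+1}_{\K}$ really does arise from a model of the stated form (ruling out exotic differentials is immediate here by degree reasons, but should be stated cleanly), and that the identification of subgroups supplied by Lemma \ref{equi} is genuinely an equality of subsets of ${\mathcal E}(S^{2n+1}_{\K})$ and not merely an abstract isomorphism. Once the model-level classification ``exactly one c-symplectic fibration up to $\cong_{\K}$'' is nailed down, the depth computation is formal. The parity argument (formal dimension $2n$, even) also re-proves consistency with Corollary \ref{examp}(2): $Y_{\mu}$ itself has even formal dimension and is c-symplectic, so c-s-${\rm depth}_{\K}(Y_{\mu}) = 0$, while its $S^1$-fibre $S^{2n+1}$ sits one level up.
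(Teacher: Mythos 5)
Your main argument is correct and is essentially the paper's proof spelled out in detail: the paper simply observes that the $\K$-relative model of $(*)$ is uniquely determined as $(\K[t]\otimes\Lambda(v)_{\K},D_{\K})$ with $D_{\K}v=t^{n+1}$, which is exactly your classification $D_{\K}(x)=c\,t^{n+1}$, $c\neq 0$, with all choices identified by rescaling $x$. One small correction to your optional ``sanity check'': compatibility with $D_{\K}$ does impose a constraint, since $f(t)=t$ and $f(x)=\lambda x$ force $\lambda\,t^{n+1}=D_{\K}(f(x))=f(D_{\K}(x))=t^{n+1}$, so $\lambda=1$ and ${\mathcal E}_{\mu}(S^{2n+1}_{\K})$ is the trivial subgroup, not all of ${\mathcal E}(S^{2n+1}_{\K})\cong\K^{\times}$; this does not affect the conclusion, because the poset still consists of the single class $[\mu]$.
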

\begin{proof} The result c-s-${\rm depth}_{\K}(S^{2n+1})=1$
follows from 
 the fact that 
the $\K$-Sullivan minimal model of $Y$ in $(*)$ is uniquely determined as
$(\K [t]\otimes \Lambda (v)_{\K},D_{\K})$ with $D_{\K}t=0$ and $D_{\K}v=t^{n+1}$
for $M(S^{2n+1})=(\Lambda (v),0)$.
\end{proof}
One might expect or think that $fd(X)$ is even if and only if c-s-${\rm depth}_{\K}(X) =0$, or equivalently that $fd(X)$ is odd if and only if c-s-${\rm depth}_{\K}(X) >0$. However this  turns out not to be the case, due to the
following  result:

\begin{thm}{\rm (\cite[Theorem 1.2]{SY}\label{SaYa})}  When 
$H^*(X;\Q)\cong \Lambda (v_1,v_2,\cdots ,v_n)$ with all $|v_i|$ odd
and  $1<|v_1|\leq |v_2|\leq \cdots \leq |v_n|$,
then $X$ is pre-c-symplectic 
if and only if $n$ is odd
and $|v_1|+|v_{n-1}|<|v_n|$,
 $|v_2|+|v_{n-2}|<|v_n|$, $\cdots$,
 $|v_{(n-1)/2}|+|v_{(n+1)/2}|<|v_n|$.
\end{thm}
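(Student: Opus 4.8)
The plan is to work throughout with the $\Q$-relative Sullivan model $(**)$. Since $H^*(X;\Q)$ is a finitely generated exterior algebra on odd-degree generators, it is intrinsically formal, so $M(X)=(\Lambda V,0)$ with $V=\langle v_1,\dots,v_n\rangle$ and $X\simeq_{\Q}S^{|v_1|}\times\cdots\times S^{|v_n|}$; the relative model becomes $(\Q[t]\otimes\Lambda(v_1,\dots,v_n),D)$ with $Dt=0$ and each $Dv_i=t\,\omega_i$, where $\omega_i$ is homogeneous of even degree $|v_i|-1$. This model is Sullivan minimal, hence equals $M(Y_\mu)$, so $Y_\mu$ is rationally elliptic with $t$ its only even-degree homotopy generator. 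A monomial $t^{a}v_{j_1}\cdots v_{j_{2r}}$ can appear in $\omega_i$ only when $|v_{j_1}|+\cdots+|v_{j_{2r}}|\le|v_i|-1$; in particular a quadratic summand $t^{a}v_jv_k$ of $Dv_i$ forces $|v_j|+|v_k|<|v_i|\le|v_n|$. Write $n=2m+1$ (when $n$ is odd), $p=(|v_n|+1)/2$, and $N=\tfrac12(\sum_i|v_i|-1)$, so $2N=fd(X)-1$.

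Two points are immediate. First, $n$ must be odd: by the Proposition above $fd(X)=\sum_i|v_i|$ must be odd, and a sum of $n$ odd numbers is odd iff $n$ is. Second --- the combinatorial lemma at the heart of the statement --- the set $\{v_1,\dots,v_{2m}\}$ admits a partition into $m$ pairs, each of degree-sum $<|v_n|$, if and only if $|v_i|+|v_{n-i}|<|v_n|$ for $i=1,\dots,m$: ``if'' via the pairs $(v_i,v_{n-i})$, and ``only if'' because, if $|v_j|+|v_{n-j}|\ge|v_n|$, each of the $j$ largest generators $v_{n-j},\dots,v_{2m}$ needs a partner of degree $<|v_n|-|v_{n-j}|\le|v_j|$, while only $j-1$ generators have degree $<|v_j|$.

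For sufficiency, assuming $n=2m+1$ and all the inequalities, I would exhibit the fibration by the perturbation $Dv_i=0$ for $i\le 2m$ and $Dv_{2m+1}=\Omega:=t^{p}+\sum_{i=1}^{m}t^{a_i}v_iv_{n-i}$ with $a_i=\tfrac12(|v_n|+1-|v_i|-|v_{n-i}|)$; the inequalities are exactly what makes $a_i\ge1$, so $\Omega\in(t)$ and the perturbation is admissible, while $D^2=0$ is immediate, and by \cite[Proposition 4.2]{H} such a rational $S^1$-fibration is realized by a free $S^1$-action on a finite complex, hence by a fibration $\mu$ over $K(\Z,2)$. Since $Dv_i=0$ for $i\le 2m$, the cocycles are the $v_{2m+1}$-free elements and the coboundaries are the ideal $(\Omega)$, so $H^*(Y_\mu;\Q)\cong R/(\Omega)$, $R:=\Q[t]\otimes\Lambda(v_1,\dots,v_{2m})$. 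Here $\Omega$ is a non-zero-divisor of $R$ (in a $\Q[t]$-basis of $R$ ordered by $v$-word-length, multiplication by $\Omega$ is triangular with diagonal $t^{p}$), so $R/(\Omega)$ is finite-dimensional and graded Gorenstein, hence a Poincar\'e duality algebra, with Poincar\'e series $(1-q^{2p})P_R(q)=(1+q^2+\cdots+q^{2p-2})\prod_{i=1}^{2m}(1+q^{|v_i|})$ --- palindromic of top degree $2p-2+\sum_{i\le 2m}|v_i|=\sum_i|v_i|-1=2N$, with one-dimensional top. Finally, reducing $\Omega\equiv0$ and using that any product of more than $m$ of the distinct summands $t^{a_i}v_iv_{n-i}$ vanishes, a direct computation gives that $[t]^{k}=0$ in $R/(\Omega)$ exactly for $k\ge (m+1)p-\sum_{i=1}^{m}a_i$, and one checks the identity $(m+1)p-\sum_{i=1}^{m}a_i=N+1$. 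Hence $[t]^{N}\neq0$ generates $H^{2N}(Y_\mu;\Q)$, so $Y_\mu$ is c-symplectic.

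The remaining and main difficulty is the necessity of the inequalities, which I would prove contrapositively: if $|v_j|+|v_{n-j}|\ge|v_n|$ for some $j\le m$, then no admissible $D$ makes $Y_\mu$ c-symplectic. If $Y_\mu$ is c-symplectic then (as in the Proposition above) it is Poincar\'e dual of dimension $2N=fd(X)-1$, $H^*(Y_\mu;\Q)$ is finite-dimensional, and $[t]^{N}\neq0$. Finite-dimensionality forces a nonzero ``pure'' summand $\lambda_i t^{(|v_i|+1)/2}$ in some $Dv_i$ (otherwise $D$ never hits a power of $t$ and every $[t]^{k}\neq0$). Writing a hypothetical primitive of $t^{N}$ as $\xi=\sum_S q_S(t)v_S$, the equation $D\xi=t^{N}$ is a triangular linear system in the $q_S$ whose solvability obstructions are controlled by the quadratic mixed summands of the $Dv_i$, organized as a weighted graph on $\{v_1,\dots,v_n\}$; tracking how far $t^{N}$ can be pushed away from $\operatorname{Im}D$, one shows that $[t]^{N}\neq0$ forces those mixed summands to contain a partition of $\{v_1,\dots,v_{2m}\}$ into $m$ pairs of degree-sum $<|v_n|$. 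Otherwise some product of the available $v_jv_k$'s is annihilated by a relation $v_j^2=0$ and $[t]$ becomes nilpotent of index $\le N$ (or $H^*$ is infinite-dimensional) --- exactly the behaviour of $S^{3}\times S^{3}\times S^{3}$ and of $S^{3}\times S^{7}\times S^{7}\times S^{7}\times S^{11}$. By the lemma, such a partition exists iff $|v_i|+|v_{n-i}|<|v_n|$ for all $i=1,\dots,m$. I expect the bookkeeping in this last step --- repeated degrees, mixed summands of $v$-word-length larger than $2$, and ruling out cancellations among the $Dv_i$ for different $i$ --- to be the real technical crux.
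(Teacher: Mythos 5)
First, note that the paper itself does not prove this statement: it is quoted from \cite[Theorem 1.2]{SY}, so there is no internal proof to compare against, and your proposal has to stand on its own. On its own terms, roughly half of it does. The setup (formality of an exterior algebra on odd generators, the shape $Dv_i\in(t)$ of the relative model, the degree bound on monomials in $Dv_i$), the parity argument that $n$ must be odd, and the combinatorial lemma characterizing when a pair-partition of $\{v_1,\dots,v_{2m}\}$ with degree sums $<|v_n|$ exists are all correct. The sufficiency construction is also sound: the perturbation $Dv_n=t^{p}+\sum_i t^{a_i}v_iv_{n-i}$ is admissible exactly when the stated inequalities hold, $\Omega$ is a non-zero-divisor by your word-length triangularity, the identity $(m+1)p-\sum_i a_i=N+1$ checks out, and the missing details (that iterating $t^{p}\equiv-\sum_i t^{a_i}v_iv_{n-i}$ gives $t^{N}\equiv\pm m!\,t^{p-1}v_1\cdots v_{2m}$, and that this class is not in $(\Omega)$, again by word-length triangularity) are routine; note also that ellipticity of $Y_\mu$ already gives Poincar\'e duality, so the Gorenstein digression is unnecessary.

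The genuine gap is the necessity of the inequalities, which you yourself flag as ``the real technical crux'' and do not carry out. What is missing is exactly the implication: if $(\Q[t]\otimes\Lambda V,D)$ has finite-dimensional cohomology and $[t]^{N}\neq0$, then the terms of $D$ must contain a partition of $\{v_1,\dots,v_{2m}\}$ into even blocks of degree sum $<|v_n|$ (whence, by your lemma, the inequalities). Your sketch reduces this to ``quadratic mixed summands organized as a weighted graph,'' but that formulation does not account for higher word-length summands such as $Dv_n=v_1v_2v_3v_4t+t^{17}$ --- which occur throughout the paper's examples (e.g.\ Example 5.5) and genuinely produce c-symplectic total spaces --- nor for cancellations and interactions between the $Dv_i$ for different $i$, including repeated degrees $|v_i|=|v_j|$. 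So the claimed dichotomy (``either the quadratic terms contain a pair-partition, or $[t]$ is nilpotent of index $\le N$, or $H^*$ is infinite-dimensional'') is neither proved nor even correctly stated for a general admissible $D$. Since this direction is precisely what shows that spaces such as $S^{11}\times S^{15}\times\cdots\times S^{4n-1}$ in Example 3.12 are \emph{not} pre-c-symplectic, the proposal as written establishes only the ``if'' half and the parity condition; the ``only if'' half of the degree inequalities still has to be supplied, e.g.\ along the lines of the argument in \cite{SY}.
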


\begin{ex}\label{exSY} Let $n$ be an odd integer $\geqq 5$ and consider the following space:
$$X:=S^{11} \times S^{15} \times \cdots \times S^{4k-1} \times \cdots \times S^{4n-1} \quad (k>3).$$
Since $n$ is odd, the formal dimension $fd(X)$ is odd, but it follows from Theorem \ref{SaYa} that c-s-${\rm depth}_{\K}(X) =0$.
\end{ex}
Theorem \ref{sp} at the end of \S 5  says  for the  symplectic group $Sp(n)$
whose  dimension $\dim Sp(n)$ is 
$n(2n+1)$ that c-s-${\rm depth}_{\K}(Sp(n)) \geqq \frac{n+1}{2}$ for any odd integer $n>3$. Thus we have the following
\begin{prop} A c-symplectic depth  can be arbitrarily large. 
\end{prop}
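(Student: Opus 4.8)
The plan is to reduce the statement ``a c-symplectic depth can be arbitrarily large'' to the quantitative lower bound for the symplectic groups that is promised as Theorem~\ref{sp} in \S 5. Concretely, I would argue as follows. By Theorem~\ref{sp} we have $\mbox{c-s-}{\rm depth}_{\K}(Sp(n)) \geqq \frac{n+1}{2}$ for every odd integer $n>3$. Given any prescribed integer $N>0$, choose an odd integer $n>3$ with $\frac{n+1}{2}\geqq N$ (for instance any odd $n\geqq 2N-1$ with $n>3$). Then the simply connected space $X=Sp(n)$, which has finite-dimensional rational cohomology so that $\mbox{c-s-}{\rm depth}_{\K}(X)<\infty$ by Proposition~\ref{finite}, satisfies $\mbox{c-s-}{\rm depth}_{\K}(X)\geqq N$. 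Since $N$ was arbitrary, the invariant is unbounded over the class of simply connected spaces, which is exactly the assertion of the proposition.

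The only real content here is Theorem~\ref{sp}, which is not yet available at this point in the paper; so strictly speaking this proposition is a ``forward reference'' and its one-line proof simply packages that later theorem. If I wanted to make the argument self-contained at this stage, I would instead observe that $Sp(n)$ has the rational homotopy type of a product of odd spheres $S^3\times S^7\times\cdots\times S^{4n-1}$, and invoke the characterization of pre-c-symplectic products of odd spheres together with a counting of the nested chain of subgroups $\mathcal{E}_{\mu}(X_{\K})$ arising from the different choices of c-symplectic total space $Y_\mu$; the depth then grows like a linear function of $n$ because the number of admissible ``degrees'' $t^{k}$ appearing in the relative model $(\K[t]\otimes\Lambda V_{\K},D_{\K})$ grows with $n$. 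But since the excerpt explicitly licenses the use of Theorem~\ref{sp}, the short packaging argument above is the intended proof.

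The main obstacle, such as it is, is purely expository rather than mathematical: one must be sure that the quantity $\frac{n+1}{2}$ in Theorem~\ref{sp} genuinely tends to infinity with $n$ ranging over odd integers $>3$ (it does), and that nothing in the definition of c-s-depth caps it by a universal constant independent of $X$ (Proposition~\ref{finite} only gives finiteness for each fixed $X$, not a uniform bound). Once those two points are in hand the proof is immediate.

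\begin{proof}
Let $N>0$ be any integer. Choose an odd integer $n>3$ with $\frac{n+1}{2}\geqq N$; for example, take $n$ to be any odd integer with $n\geqq \max\{5,\,2N-1\}$. The symplectic group $Sp(n)$ is a simply connected space with $\dim H^*(Sp(n);\Q)<\infty$, so $\mbox{c-s-}{\rm depth}_{\K}(Sp(n))<\infty$ by Proposition~\ref{finite}. On the other hand, Theorem~\ref{sp} gives
$$\mbox{c-s-}{\rm depth}_{\K}(Sp(n))\ \geqq\ \frac{n+1}{2}\ \geqq\ N.$$
Since $N$ was arbitrary, the values of $\mbox{c-s-}{\rm depth}_{\K}(-)$ on simply connected spaces are unbounded, i.e.\ a c-symplectic depth can be arbitrarily large.
\end{proof}
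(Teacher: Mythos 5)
Your proof is correct and is essentially the paper's own argument: the proposition is stated there precisely as a corollary of the forward-referenced Theorem~\ref{sp}, and one simply lets $n$ range over odd integers $>3$ so that $\frac{n+1}{2}\to\infty$. The only tiny point to watch is that Theorem~\ref{sp} as stated gives the bound over $\overline{\Q}$, which suffices for the claim that \emph{a} c-symplectic depth can be arbitrarily large.
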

The above Corollary \ref{examp} (4) suggests that c-symplectic depth would not behave well with respect to taking the product of spaces. In fact, using Theorem \ref{sp} and Example \ref{exSY} we can make the following
\begin{prop} 
(1) Even if c-s-${\rm depth}_{\K}(X) =0$ and c-s-${\rm depth}_{\K}(Y)=1$, c-s-${\rm depth}_{\K}(X\times Y)$ can be arbitrarily large. 

(2) Even if  c-s-${\rm depth}_{\K}(X)= $c-s-${\rm depth}_{\K}(Y)=0$, c-s-${\rm depth}_{\K}(X\times Y)$ can be arbitrarily large.
\end{prop}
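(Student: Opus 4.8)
The plan is to exhibit, for each choice of data, explicit spaces whose product has large c-symplectic depth, using the two ingredients already available: the symplectic groups $Sp(n)$ from Theorem \ref{sp}, which have c-s-${\rm depth}_{\K}$ at least $\tfrac{n+1}{2}$ for odd $n>3$, and the products of odd spheres from Example \ref{exSY}, which have c-s-${\rm depth}_{\K}=0$ despite odd formal dimension. For part (1), I would take $Y=S^{2m+1}$ for some $m>0$, so that c-s-${\rm depth}_{\K}(Y)=1$ by the Theorem just before, and take $X$ to be a suitable product of odd spheres with c-s-${\rm depth}_{\K}(X)=0$ as in Example \ref{exSY}; the real content is to check that $X\times Y$ can be arranged (by choosing the sphere dimensions appropriately relative to $2m+1$) to have the rational cohomology of an exterior algebra on odd generators satisfying the pre-c-symplectic inequalities of Theorem \ref{SaYa}, and in fact to have depth growing without bound. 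Alternatively, and more cleanly, I would invoke Theorem \ref{sp} directly: since $Sp(n)$ has the rational homotopy type of a product of odd spheres $S^3\times S^7\times\cdots\times S^{4n-1}$, one can realize $Sp(n)$ (up to rational/$\K$-homotopy) as a product $X\times Y$ where $Y=S^{4n-1}$ has c-s-${\rm depth}_{\K}(Y)=1$ and $X=S^3\times S^7\times\cdots\times S^{4n-5}$ has c-s-${\rm depth}_{\K}(X)=0$ (the latter because its top dimension $3+7+\cdots+(4n-5)$ is even, being a sum of an even number of terms each $\equiv 3\pmod 4$, when $n-1$ is even, i.e. $n$ odd; one checks the parity carefully). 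Then c-s-${\rm depth}_{\K}(X\times Y)=$ c-s-${\rm depth}_{\K}(Sp(n))\geqq\tfrac{n+1}{2}\to\infty$.

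For part (2), the strategy is the same but now I must split $Sp(n)$ (rationally) as a product of \emph{two} factors each of c-symplectic depth $0$. Writing the rational model of $Sp(n)$ as $\Lambda(v_1,\dots,v_n)$ with $|v_i|=4i-1$, I would partition the generators into two blocks $A$ and $B$ so that each block, viewed as an exterior algebra on odd generators, fails the pre-c-symplectic criterion of Theorem \ref{SaYa} — most simply by making each block have an \emph{even} number of generators, which forces c-s-${\rm depth}_{\K}=0$ by the Corollary on even formal dimension (a sum of an even number of odd integers is even). For $n$ odd this requires care since $n$ itself is odd; one workaround is to allow a slightly different $n$, or to pad with an extra even sphere (which does not change c-s-${\rm depth}_{\K}$ being $0$), or to choose the two blocks to each violate one of the gap inequalities $|v_i|+|v_{n-i}|<|v_n|$ rather than relying on parity. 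Once $Sp(n)\simeq_{\Q} X\times Y$ with both c-s-${\rm depth}_{\K}(X)=$ c-s-${\rm depth}_{\K}(Y)=0$, the conclusion c-s-${\rm depth}_{\K}(X\times Y)=$ c-s-${\rm depth}_{\K}(Sp(n))\geqq\tfrac{n+1}{2}$ follows from the $\K$-homotopy invariance of c-s-depth (the Lemma in \S 3) together with Theorem \ref{sp}.

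The main obstacle I anticipate is the bookkeeping for part (2): one must genuinely verify that the $\K$-Sullivan model of $Sp(n)$ decomposes as a tensor product of the models of the two chosen factors in a way compatible with the fibration $(*)$, and that the factors can simultaneously be forced to have c-s-${\rm depth}_{\K}=0$. The parity argument (even number of odd-degree generators $\Rightarrow$ even formal dimension $\Rightarrow$ depth $0$) is the cleanest route, but reconciling it with the constraint that $n$ be odd (needed for Theorem \ref{sp}) is the delicate point; I expect the fix is either to absorb one generator into an even-sphere factor or to use the gap-inequality obstruction from Theorem \ref{SaYa} for one of the blocks. Everything else — invoking $\K$-homotopy invariance, the product formula for formal dimension via Künneth, and the lower bound from Theorem \ref{sp} — is routine given the results already established in the paper.
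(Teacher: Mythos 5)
Your proposal is correct and follows essentially the same route as the paper: part (1) is exactly the paper's argument (write $Sp(n)\simeq_{\Q} Sp(n-1)\times S^{4n-1}$ for $n$ odd, note $fd(Sp(n-1))=(n-1)(2n-1)$ is even so the first factor has depth $0$, and invoke Theorem \ref{sp}). For part (2) you hedge among several fixes without committing; the paper's choice is the last option you list: take $X=S^{3}\times S^{7}$ (even formal dimension, hence depth $0$ by the parity corollary) and $Y=S^{11}\times\cdots\times S^{4n-1}$, which is precisely the space of Example \ref{exSY} --- it has an odd number of odd generators but violates the gap inequalities of Theorem \ref{SaYa} (e.g. $11+(4n-5)>4n-1$), so it also has depth $0$, while $X\times Y\simeq_{\Q}Sp(n)$ has depth at least $\frac{n+1}{2}$.
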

\begin{proof}
(1) Indeed, let $n$ be an odd integer $\geqq 5$ and let us consider the  rational homotopy  decomposition 
$$Sp(n)\simeq_{\Q}S^{3}\times S^{7}\times \cdots \times S^{4(n-1)-1} \times S^{4n-1} \simeq_{\Q} Sp(n-1) \times S^{4n-1}.$$
We now let $X:=Sp(n-1)$ and $Y:=S^{4n-1}$. Then c-s-${\rm depth}_{\K}(Y)= 1$. 
When $n$ is odd,  
$\dim Sp(n)=n(2n+1)$ is odd and $\dim Sp(n-1)=fd(Sp(n-1)) =fd(X)$ is even, thus c-s-${\rm depth}_{\K}(X) =0$. However it follows from Theorem \ref{sp} that c-s-${\rm depth}_{\K}(X \times Y) \geqq \frac{n+1}{2}$.\\
(2) Now consider the following:
$$Sp(n)\simeq_{\Q}\Bigl(S^{3}\times S^{7}\Bigr) \times \Bigl (S^{11} \times \cdots \times S^{4k-1} \times \cdots \times S^{4n-1} \Bigr) \, \, (k>3)$$
and set $X:=S^{3}\times S^{7}$ and $Y:= S^{11} \times \cdots \times S^{4n-1}$. Then we have that c-s-${\rm depth}_{\K}(X)= {\rm \mbox{c-s-}depth}_{\K}(Y)=0$, but we have c-s-${\rm depth}_{\K}(X \times Y)\geqq \frac{n+1}{2}.$
\end{proof}





The following theorem indicates an example that
a c-symplectic depth
strongly depends on a field $\K$.

\begin{thm}\label{CP} When $X=\C P^n\times S^{2n+3}$
with $n$ even, 
c-s-${\rm depth}_{\Q}(X)=1$ but
c-s-${\rm depth}_{\overline{\Q}}(X)=  c (n+1)$
where $c (n+1):=n_1+n_2+\cdots +n_m+1$ for  the  
prime decomposition $n+1=p_1^{n_1}p_2^{n_2}\cdots p_m^{n_m}$.
Here $\overline{\Q}$ means the  algebraic  closure  of $\Q$. 
Moreover, there is a sequence of field extensions
$$\Q \subset \K_1 \subset \K_2 \subset \cdots \subset \K_{c(n+1)}\subset \overline{\Q}$$
such that
$${\rm \mbox{c-s-}depth}_{\Q}(X)<{\rm \mbox{c-s-}depth}_{\K_1}(X)<{\rm \mbox{c-s-}depth}_{\K_2}(X)<\cdots <{\rm \mbox{c-s-}depth}_{\K_{c(n+1)}}(X). $$
\end{thm}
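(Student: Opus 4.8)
The plan is to compute the c-symplectic poset ${\mathcal P}_{\K}(X)$ directly from the $\K$-Sullivan model of $X = \C P^n \times S^{2n+3}$, which is $(\Lambda(x,y,v), d)$ with $|x|=2$, $dx=0$, $|y|=2n+1$, $dy=x^{n+1}$, and $|v|=2n+3$, $dv=0$. First I would enumerate the relative models $(**)$: these are the differentials $D_{\K}$ on $\K[t]\otimes\Lambda(x,y,v)$ with $D_{\K}t=0$, restricting to $d$ on the fibre. Since $y$ and $v$ are the only generators of odd degree that can receive a differential, the possibilities are $D_{\K}y = x^{n+1} + \alpha t^{n+1} + (\text{lower }t\text{-degree mixed terms})$ and $D_{\K}v = \beta t^{n+2} + \gamma t\,x^{n+1} + \cdots$; the Poincaré duality / c-symplectic requirement on the total space $Y$ of formal dimension $fd(X)+1 = (4n+4)+(2n+3) - 1 = 6n+6$ wait — more carefully, $fd(Y) = fd(X)-1 = 2n+(2n+3)-1$, hmm, $fd(X) = 2n + (2n+3) = 4n+3$ which is odd, so $fd(Y)=4n+2$, and one needs $t^{2n+1}$ to be a top class in $H^*(Y;\Q)$. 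I would pin down exactly which $(\alpha,\beta,\gamma,\dots)$ give a c-symplectic $Y$, reduce to normal forms under the action of $aut_t$, and over $\Q$ show the only surviving class makes ${\mathcal P}_{\Q}(X)$ a single point, so c-s-${\rm depth}_{\Q}(X)=1$.

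The heart of the field-dependence is the following: after normalization the classifying data for the non-trivial $S^1$-fibrations comes down to solving an equation of the shape $z^{n+1} = (\text{unit})$ in $\K$, i.e. to the splitting behaviour of $n+1$-th roots, equivalently to the subgroup lattice of the roots of unity (or of $\Z/(n+1)$) available inside $\K$. Over $\Q$ only trivial roots exist; over $\overline{\Q}$ all $(n+1)$-th roots of unity are present, and distinct divisors $d \mid n+1$ give genuinely different subgroups ${\mathcal E}_{\mu}(X_{\K})\subset{\mathcal E}(X_{\K})$, one for each divisor, whose inclusion poset is the divisor lattice of $n+1$. The height of that lattice is exactly $n_1+n_2+\cdots+n_m$ (the number of prime factors of $n+1$ counted with multiplicity), giving c-s-${\rm depth}_{\overline{\Q}}(X)=n_1+\cdots+n_m+1 = c(n+1)$. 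So the key step is to identify ${\mathcal P}_{\K}(X)$ with a sub-poset of the divisor lattice of $n+1$ controlled by which cyclotomic conditions $\K$ satisfies, and then observe that $\overline{\Q}$ realizes the whole lattice while $\Q$ realizes only the bottom.

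For the chain of field extensions, I would take a maximal chain $1 = d_0 \mid d_1 \mid d_2 \mid \cdots \mid d_{c(n+1)-1} = n+1$ in the divisor lattice (each step multiplying by one prime $p_j$), and set $\K_i = \Q(\zeta_{d_i})$, the cyclotomic field of $d_i$-th roots of unity, so that $\Q \subset \K_1 \subset \cdots \subset \K_{c(n+1)-1}\subset\overline{\Q}$; one extra step $\K_{c(n+1)} = \overline{\Q}$ or a further cyclotomic field can be appended so the indexing matches the theorem's $\K_{c(n+1)}$. Passing from $\K_{i-1}$ to $\K_i$ adjoins exactly the $d_i$-th roots of unity, hence creates one new equivalence class $[\mu]$ sitting strictly above all previously available ones in the poset (because the divisor $d_i$ strictly dominates all $d_j$, $j<i$), so the maximal chain in ${\mathcal P}_{\K_i}(X)$ gains length one at each stage; combined with Lemma stating c-s-depth is a $\K$-homotopy invariant and with the monotonicity $\K_{i-1}\subset\K_i \Rightarrow {\mathcal P}_{\K_{i-1}}(X)$ is a sub-poset of ${\mathcal P}_{\K_i}(X)$, this yields the strict inequalities.

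I expect the main obstacle to be the bookkeeping in the first step: writing down \emph{all} relative models $D_{\K}$ — including the mixed terms like $t^j x^{n+1-j}$ in $D_{\K}y$ and $t^j x^{\ldots}$ in $D_{\K}v$ — then carefully computing $H^*(Y;\Q)$ for each to test the c-symplectic condition, and finally quotienting by the $aut_t$-action to get honest isomorphism classes of fibrations $\mu$. Once the surviving families are in normal form, identifying ${\mathcal E}_{\mu}(X_{\K})=\operatorname{Im}F'_{\mu}$ for each and comparing them as subgroups of ${\mathcal E}(X_{\K})$ — which is where the arithmetic of $(n+1)$-th roots enters — should be a direct computation, and the divisor-lattice structure then follows formally. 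The parity hypothesis ``$n$ even'' will be used to ensure the relevant sign/degree constraints are consistent (in particular that the generator $v$ of degree $2n+3$ and the class $t^{n+2}$ interact so as not to collapse the poset).
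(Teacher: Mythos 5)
Your proposal follows essentially the same route as the paper's proof: the same Sullivan model $(\Lambda(x,y,z),d)$ with $dy=x^{n+1}$, the same reduction of the c-symplectic relative models to a family indexed by divisors of $n+1$ (the paper takes $D_\mu(y)=x^{n+1}+x^it^{n-i+1}+t^{n+1}$, $D_\mu(z)=xt^{n+1}$ with $i\mid n+1$ or $i=0$), the identification of each ${\mathcal E}_{\mu}(X_{\K})$ with the group $\{a\in\K\mid a^{N_\mu}=1\}$ of roots of unity so that the poset becomes the divisor lattice of $n+1$, of height $n_1+\cdots+n_m$, and the same maximal chain of cyclotomic fields. Two small corrections to your sketch. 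First, the hypothesis that $n$ is even is not about sign or degree constraints between $z$ and $t^{n+2}$: it is purely arithmetic — it forces $n+1$ to be odd, so every $N_\mu$ is odd and $\{a\in\Q\mid a^{N_\mu}=1\}=\{1\}$ for every $\mu$; all the subgroups ${\mathcal E}_{\mu}(X_{\Q})$ therefore coincide, ${\mathcal P}_{\Q}(X)$ is a single point, and c-s-${\rm depth}_{\Q}(X)=1$ (were $n$ odd, $-1\in\Q$ would already be a nontrivial $(n+1)$-th root of unity and the argument for depth $1$ over $\Q$ would break). Second, your count of $c(n+1)-1$ cyclotomic fields $\Q(\zeta_{d_1})\subset\cdots\subset\Q(\zeta_{n+1})$ above $\Q$ is the correct one, since the depths realized are exactly $1,2,\ldots,c(n+1)$; the ``extra step'' you propose to append cannot produce a further strict increase because the depth is already maximal at $\Q(\zeta_{n+1})$ — the indexing in the theorem's displayed chain (and in the paper's own proof) is off by one, and your version is the one that actually holds.
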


We see the following result from
the proof of Theorem \ref{CP}.

\begin{cor}\label{Zn}
When $X=\C P^n\times S^{2n+3}$
with $n$ even, the poset 
${\mathcal P}_{\overline{\Q}}(X)$
contains  the subgroup poset of  the group $\Z/ (n+1)\Z$. 
\end{cor}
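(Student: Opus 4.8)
The plan is to extract Corollary~\ref{Zn} directly from the structure of the proof of Theorem~\ref{CP}, so the first thing I would do is recall precisely which subgroups of $\mathcal{E}(X_{\overline{\Q}})$ appear in that proof. Writing $M(\C P^n) = (\Lambda(x,y)_{\overline{\Q}}, d)$ with $|x|=2$, $dx=0$, $|y|=2n+1$, $dy=x^{n+1}$, and $M(S^{2n+3})=(\Lambda(z)_{\overline{\Q}},0)$ with $|z|=2n+3$, the relative $\overline{\Q}$-model $(**)$ of a fibration $\mu$ over $K(\overline{\Q},2)$ has the form $(\overline{\Q}[t]\otimes\Lambda(x,y,z), D)$ with $Dt=0$, $Dx=0$, $Dy=x^{n+1}$, and $Dz$ some polynomial in $t$ and $x$ of degree $2n+3$; the c-symplectic condition on $Y_\mu$ pins down $Dz$ up to the choices that make $Y_\mu$ a Poincar\'e duality space with a degree-two class whose top power is a fundamental class. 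The key point from the proof of Theorem~\ref{CP} is that the isomorphism classes of such fibrations, and the corresponding subgroups $\mathcal{E}_\mu(X_{\overline{\Q}})$, are governed by $(n+1)$-st roots of a fixed scalar (equivalently by the action of the cyclic group $\mu_{n+1}$ of $(n+1)$-st roots of unity, which lives in $\overline{\Q}$ but not in $\Q$), and the inclusion order on these subgroups reproduces exactly the divisibility lattice of $n+1$, i.e. the subgroup lattice of $\Z/(n+1)\Z$.

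Concretely, I would argue as follows. In the proof of Theorem~\ref{CP} one identifies, for each divisor $e \mid (n+1)$, a fibration $\mu_e$ (coming from rescaling $z$ or $t$ by an $e$-th root, so that $Dz$ acquires a factor reflecting the order-$e$ phenomenon) together with its fibrewise self-equivalence group, and one shows $\mathcal{E}_{\mu_e}(X_{\overline{\Q}}) \subseteq \mathcal{E}_{\mu_{e'}}(X_{\overline{\Q}})$ precisely when $e \mid e'$. Thus the assignment $e \mapsto [\mu_e]$ gives an order-preserving, order-reflecting injection from the divisor poset of $n+1$ into $\mathcal{P}_{\overline{\Q}}(X)$. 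Since the divisor poset of $n+1$ is canonically isomorphic to the poset of subgroups of $\Z/(n+1)\Z$ (a subgroup of $\Z/(n+1)\Z$ has order $d$ for each $d\mid n+1$, and containment corresponds to divisibility), this embedding exhibits the subgroup poset of $\Z/(n+1)\Z$ inside $\mathcal{P}_{\overline{\Q}}(X)$. The height computation $c(n+1)=n_1+\cdots+n_m+1$ in Theorem~\ref{CP} is exactly the length of a maximal chain of divisors of $n+1$, which is reassuring as a consistency check: a longest chain in the subgroup lattice of $\Z/(n+1)\Z$ has $n_1+\cdots+n_m+1$ terms.

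The steps, in order, are: (i) write down the $\overline{\Q}$-relative model $(**)$ for $X=\C P^n\times S^{2n+3}$ and recall from the proof of Theorem~\ref{CP} the explicit list of c-symplectic fibrations $\mu_e$ indexed by divisors $e$ of $n+1$; (ii) recall the computation of $\mathcal{E}_{\mu_e}(X_{\overline{\Q}})$ as a subgroup of $\mathcal{E}(X_{\overline{\Q}})$ and the proof that $\mathcal{E}_{\mu_e}(X_{\overline{\Q}})\subseteq\mathcal{E}_{\mu_{e'}}(X_{\overline{\Q}})$ iff $e\mid e'$; (iii) invoke the standard poset isomorphism between the divisor lattice of $n+1$ and the subgroup lattice of $\Z/(n+1)\Z$; (iv) conclude that $e\mapsto[\mu_e]$ embeds the latter into $\mathcal{P}_{\overline{\Q}}(X)$. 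I expect the only real work to be step (ii) — namely being careful that the containments among the $\mathcal{E}_{\mu_e}$ are \emph{exactly} the divisibility relations and not something coarser — but this is precisely the content already established in proving Theorem~\ref{CP}, so here it amounts to bookkeeping rather than new analysis. The remaining steps are formal.
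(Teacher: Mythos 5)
Your proposal is correct and follows essentially the same route as the paper: the corollary is read off from the proof of Theorem~\ref{CP}, where for each divisor $e$ of $n+1$ one has a fibration with ${\mathcal E}_{\mu_e}(X_{\overline{\Q}})\cong\{a\mid a^{e}=1\}$, and containment of these root-of-unity subgroups is exactly divisibility, hence the divisor poset of $n+1$ (equivalently the subgroup poset of $\Z/(n+1)\Z$) embeds in ${\mathcal P}_{\overline{\Q}}(X)$. The only slip is cosmetic: in the paper the divisor-dependent perturbation sits in $D(y)=x^{n+1}+x^{i}t^{n-i+1}+t^{n+1}$ while $D(z)=xt^{n+1}$ is fixed, not in $Dz$ as you wrote, but this does not affect the argument.
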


We remark that ${\rm cat}_0(\C P^n\times S^{2n+3})={\rm cat}(\C P^n\times S^{2n+3})=n+1$.
Here ${\rm cat}$ and ${\rm cat}_0$ are respectively the Lusternik-Schnirelmann (LS) category 
and the rational LS category ${\rm cat}_0(X):={\rm cat}(X_{\Q})$ of a space $X$ (e.g., see \cite{FHT}).
Notice that ${\rm cat}_0(X)={\rm cat}(X_{\K})$ 
for all field $\K$ \cite{He}.
On the other hand, for a cyclotomic field $\K$, the proof of Theorem \ref{CP} indicates that
${\mathcal P}_{\K}(X)$ often presents more informations on a classification 
of $\{ Y_{\mu}\}$ in  $(*)$ in \S 1 than ${\mathcal P}_{\Q}(X)$.
In connection with LS categories, we would like to pose the following
\begin{qu}
 For any field $\K$, is
c-s-${\rm depth}_{\K}(X)\leq {\rm cat}_0(X)\ ?$
\end{qu}


\begin{rem}
Let $Y$ be a simply connected c-symplectic space.
We 
define 
$$l_X(Y)_{\K}:=
\begin{cases}
\max \{ n\mid [\mu_1]>[\mu_2]>\cdots >[\mu_{n-1}]>[\mu_{n}]=[Y] \ in \ {\mathcal P}_{\K}(X)\},\\
0, \quad \text{if there exists no  fibration $X\to Y\to K(\Z ,2)$}.
\end{cases}
$$
If $\dim \pi_2(Y)\otimes \Q=1$,
such a space $X$ uniquely exists. 
We speculate that $l_X(Y)_{\K}$ must 
reflect a certain complexity of c-symplectic structure of $Y$.
This definition is something  like the co-height  of a prime ideal (e.g.\cite{Z}) by adding $1$.
Here the co-height  of a prime ideal $p$ in a ring $R$  is defined
as the largest $n$ for which there exists
a chain of different prime ideals $p\subset p_1\subset \cdots \subset p_n\neq R$.

\end{rem}





\section{Proofs}

\noindent{\it Proof of Lemma \ref{equi}.}
Let $\mu' :X \to  Y_{\mu'}\overset{p'}{\to} K(\Z ,2)$
be a fibration with $\mu\cong_{\K} \mu'$.
Suppose that
$$\psi :M({Y_{\mu}}_{\K})=({\K} [t]\otimes \Lambda V,D)\to ({\K} [t]\otimes \Lambda V,D')=M({Y_{\mu'}}_{\K})$$
is an isomorphism over $({\K} [t],0)$.
Then the restriction map $\overline{\psi} :(\Lambda V_{\K},\overline{D}_{\K})\to 
(\Lambda V_{\K},\overline{D'}_{\K})$ is an isomorphism.
We define 
 $$ad_{\psi}: Aut_t(\K [t]\otimes \Lambda V_{\K},D_{\K})\to Aut_t(\K [t]\otimes \Lambda V_{\K},D_{\K}')$$
by $ad_{\psi}(f)=\psi\circ f\circ \psi^{-1}$, which is well-defined.
Then we get the following  commutative diagram of groups:
$$\xymatrix{
{\mathcal E}(X_{\K})\ar[r]^{ad_{\overline{\psi}}} & {\mathcal E}(X_{\K})\\
{\mathcal E}(p_{\K})\ar[u]^{res.}\ar[r]^{ad_{\psi}}& {\mathcal E}(p_{\K}'),\ar[u]_{res.}
}$$
where the vertical maps are the restriction maps.
Thus ${\mathcal E}_{\mu}(X_{\K})$
is identified 
with  ${\mathcal E}_{\mu'}(X_{\K})$
by the isomorphism $ad_{\overline{\psi}}$
given by $ad_{\overline{\psi}}(f)=\overline{\psi}\circ f\circ \overline{\psi}^{-1}$.
\hfill\qed\\

\noindent{\it Proof of Proposition \ref{finite}}.
Suppose that there is a sequence
$${\mathcal E}(X_{\K})\supsetneq  {\mathcal E}_{\mu_1}(X_{\K})\supsetneq  {\mathcal E}_{\mu_2}(X_{\K})\supsetneq \cdots \supsetneq  {\mathcal E}_{\mu_m}(X_{\K}) $$
for some $m$.
It is equivalent to  a sequence 
$$Aut(\Lambda V_{\K},d_{\K})\supsetneq  {\rm Im} F'_{\mu_1}\supsetneq  {\rm Im}F'_{\mu_2}\supsetneq \cdots \supsetneq  {\rm Im}F'_{\mu_m}$$
for the restriction maps $F'_{\mu_i}: Aut_t(\K[t]\otimes \Lambda V_{\K},{D_i}_{\K})\to Aut (\Lambda V_{\K},d_{\K})$
in \S 3.
Then we have a sequence of $\K$-algebraic groups  $$aut(\Lambda V_{\K},d_{\K})\supsetneq  G_{\mu_1}\supsetneq  G_{\mu_2}\supsetneq \cdots \supsetneq 
 G_{\mu_m}$$
 where $ G_{\mu_i}$ are the images of the restrictions sending $t$ to zero
 $$aut_t(\K[t]\otimes \Lambda V_{\K},{D_i}_{\K})\to aut (\Lambda V_{\K},d_{\K})$$
with $G_{\mu_i}/_{\sim_{\K}} = {\rm Im}F'_{\mu_i}$.
 Since $\dim H^*(\Lambda V_{\K},d_{\K})=\dim H^*(X;\K)<\infty$,
being $\K$-DGA-autmorphisms induces  $$aut(\Lambda V_{\K},d_{\K})=aut(\Lambda V^{\leq n}_{\K},d_{\K}\mid_{V^{\leq n}_{\K}})$$
for a sufficiently  large $n$.
Here $V^{\leq n}_{\K}$ means the subspace $\{ v\in V_{\K};\  |v|\leq n\}$ of $V_{\K}$,
which is finite-dimensional over $\K$.
The latter is an algebraic  matrix  group (defined by polynomial equations in the entries)
 in  the general $\K$-linear group 
$GL(N,\K )$ for a sufficiently large $N$ \cite[page 294]{Su}.
From the Noetherian property of descending chain condition, 
the integer $m$ is  bounded by $N$.  
Thus  we have 
c-s-${\rm depth}_{\K}(X)<\infty$.
\hfill\qed\\






\noindent{\it Proof of Theorem \ref{CP}.}
Let $M(X_{\K})=(\Lambda (x,y,z)_{\K},d_{\K})$
with
$$|x|=2, \ |y|=2n+1,\  |z|=2n+3 \quad \text {and}  \quad
d_{\K}(x)=d_{\K}(z)=0, \ d_{\K}(y)=x^{n+1}.$$
Then we have
$$  {\mathcal E} ({X}_{\K})=Aut(\Lambda (x,y,z)_{\K},d_{\K})=\Bigg\{ \begin{pmatrix}
a & &\\
 & a^{n+1}&\\
 &&b\\
 \end{pmatrix}\ \ | \ \ a,b\in \K-0\Bigg\},
$$
where
$f(x)=ax$, $f(y)=a^{n+1}y$ and $f(z)=bz$ for $f\in Aut(\Lambda (x,y,z)_{\K},d_{\K})$.
Let us denote
the $\K$-relative models of $(*)$  
by $ (\K [t]\otimes \Lambda (x,y,z)_{\K},D_{\mu})=M({Y_{\mu}}_{\K})$ with 
$$D_{\mu}(t)=D_{\mu}(x)=0,\ \
 D_{\mu}(y)=x^{n+1}+x^{i}t^{n-i+1}+t^{n+1} \ \mbox{  \ and}\ \ D_{\mu}(z)=xt^{n+1}$$
where $i$ is one of the divisors of $n+1$ but not $n+1$ itself
or $i=0$.
Then $fd(Y_{\mu})=4n+2$ and $[t^{2n+1}]\neq 0$
in $H^*(Y_{\mu};\K )=\K [t,x]/(x^{n+1}+x^{i}t^{n-i+1}+t^{n+1} ,xt^{n+1})$.
Let  
$$N_{\mu}:=
\begin{cases}
i  & \mbox{ when }i\neq 0 \\
n+1 & \mbox{ when }i=0.
\end{cases}
$$

\noindent
Then we have   
\begin{eqnarray*}{\mathcal E}_{\mu}(X_{\K})&=&{\rm Im}(F'_{\mu}: 
Aut_t(\K[t]\otimes \Lambda (x,y,z)_{\K},D_{\mu})\to Aut (\Lambda (x,y,z)_{\K},d_{\K}))\\
&=&\Bigg\{ \begin{pmatrix}
a & &\\
 & 1&\\
 &&a\\
 \end{pmatrix}\ \ | \ \  a^{N_{\mu}}=1\mbox{  for } a\in \K-0\Bigg\}\ \ 
(f(x)=ax,\ f(y)=y,\ f(z)=az)\\
&\cong &
  \{ a\in \K -0\ |\  \ a^{N_{\mu}}=1 \}
  \end{eqnarray*}
 as groups.
Suppose that
 $\K\supset \Q (e^{2\pi i/(n+1)}) $.
Then ${\mathcal E}_{\mu}(X_{\K})$ is isomorphic to $\Z_{N_{\mu}}=\Z /N_{\mu}\Z$.
Notice that  ${\mathcal E}_{\mu_{i}}(X_{\K})\subset  {\mathcal E}_{\mu_j}(X_{\K})$
if and only if $N_j|N_i$, i.e., $N_j$ is a divisor of $N_i$.
Therefore there is a sequence
$$(\{ a\in \K\ |\  \ a^{N_{\mu}}=1 \}=) \ {\mathcal E}_{\mu_{1}}(X_{\K})\supsetneq {\mathcal E}_{\mu_{2}}(X_{\K})\supsetneq \cdots
\supsetneq {\mathcal E}_{\mu_{c(n+1)}}(X_{\K})\ (=\{ id_{X_{\K}}\}).$$
Since any subgroup $ {\mathcal E}_{\mu}(X_{\K})$
of $ {\mathcal E}(X_{\K})$ is isomorphic to some subgroup of $\Z_{n+1}$
from the degree argument of $t,\ x,\ y$ and $z$,
 it has the  maximal length of  inclusions of subgroups.
Thus we have that $$ \mbox{c-s-}{\rm depth}_{{\K}}(X)\ (= \mbox{c-s-}{\rm depth}_{\overline{\Q}}(X))\ =c (n+1).$$


Moreover the sequence of length $c(n+1)$
$$ \Z_{p_1^{n_1}p_2^{n_2}\cdots p_m^{n_m}} \supsetneq     \Z_{p_1^{n_1-1}p_2^{n_2}\cdots p_m^{n_m}} \supsetneq 
 \Z_{p_1^{n_1-2}p_2^{n_2}\cdots p_m^{n_m}} \supsetneq  
\cdots  \supsetneq  \Z_{p_m}  \supsetneq  \{ 0\}$$
identified with 
the proper sequence of subgroups for the fibration $\mu$ of the differential $D_{\mu}$ with 
$D_{\mu}(t)=D_{\mu}(x)=0$,
 $D_{\mu}(y)=x^{n+1}+t^{n+1}$  and $D_{\mu}(z)=xt^{n+1}$:
$$  {\mathcal E}_{\mu }(X_{\Q (p_1^{n_1}p_2^{n_2}\cdots p_m^{n_m})})\supsetneq  {\mathcal E}_{\mu }(X_{\Q ({p_1^{n_1-1}p_2^{n_2}\cdots p_m^{n_m}})})\supsetneq \cdots \supsetneq  
{\mathcal E}_{\mu }(X_{\Q ({p_m})})\supsetneq  
{\mathcal E}_{\mu }(X_{\Q }),$$
where $\Q (q)$ means the extension field $\Q (e^{2\pi i/q}) $ of $\Q$ by adding 
a primitive $q$th root  of unity.
Thus there is the sequence $${\rm \mbox{c-s-}depth}_{\Q}(X)<{\rm \mbox{c-s-}depth}_{\K_1}(X)<{\rm \mbox{c-s-}depth}_{\K_2}(X)<\cdots <{\rm \mbox{c-s-}depth}_{\K_{c(n+1)}}(X)$$
for $\K_1=\Q ({p_1})$, $\K_2=\Q ({p_1p_2})$ or $\Q ({p_1^2})$,  $\cdots$,  $\K_{c(n+1)}=\Q (p_1^{n_1}p_2^{n_2}\cdots p_m^{n_m})$.
 \hfill\qed\\




\section{Examples }
In this section,  
for odd integers $n_1\leq n_2\leq \cdots \leq n_k$, let
$$M(S^{n_1}\times S^{n_2}\times \cdots \times S^{n_k})=(\Lambda V,0)=(\Lambda (v_1,v_2,..,v_k),0)$$
with $|v_i|=n_i$ for all $i$.
In the following Examples 5.1, 5.2, and 5.3(1),
the poset structure of ${\mathcal P}_{\K}(X)$
does not depend on $\K$.

\begin{ex}(c-s-${\rm depth}_{\K}(X)=1$).
When $X$ is 
$$(a)\, \, S^{3}\times S^{3}\times S^{7}, \qquad \qquad \qquad \qquad \qquad \qquad $$
$$(b)\, \, S^{7}\times S^{9}\times S^{11}\times S^{13}\times S^{23}, \qquad \qquad \qquad$$
$$(c)\, \, S^{9}\times S^{9}\times S^{11}\times S^{13}\times S^{15}\times S^{17}\times S^{29}, \ $$
$$(d)\, \, S^{9}\times S^{11}\times S^{13}\times S^{15}\times S^{17}\times S^{19}\times S^{31}, $$
  then the Hasse diagrams of ${\mathcal P}_{\K}(X)$
 are respectively one point, two points, three points  and four points:
$$(a)\ \bullet,\ \ \ \ (b)\ \bullet \bullet,\ \ \ \ \ (c)\ \bullet \bullet \ \bullet,\ \ \ \ \ 
(d)\ \bullet\bullet\bullet\bullet
$$
For example,  the differential $D$ in the case of $(d)$, $\{ \bullet\bullet\bullet \ \bullet\}=\{ \mu_1,..,\mu_4\}$,
 is given by \\

$Dv_1=\cdots =Dv_6=0  \qquad \text{and} \qquad \qquad $

$\mu_1$: $ D v_7=v_1v_6t^2+v_2v_5t+v_3v_4t^2+t^{16 },\ \ \ \ \ $

$\mu_2$: $ D v_7=v_1v_6t^2+v_2v_4t^3+v_3v_5t+t^{16 },\ \ \ \ $

$\mu_3$: $ D v_7=v_1v_5t^3+v_2v_6t+v_3v_4t^2+t^{16 },\  \ \ \ $

$\mu_4$: $ D v_7=v_1v_4t^4+v_2v_6t+v_3v_5t+t^{ 16}.\ \ \ \ \ \ $\\

Notice that $Dv_1=\cdots =Dv_{k-1}=0$ when c-s-${\rm depth}_{\K}(S^{n_1}\times S^{n_2}\times \cdots \times S^{n_k})=1$
in general.
Let  \begin{eqnarray*}
  {\mathcal E} ({X}_{\K})=Aut(\Lambda V_{\K},0)&=&\Bigg\{ \begin{pmatrix}
a & &&&&&\\
 & b&&&&&\\
 &&c&&&&\\
 &&&d&&&\\
 &&&&e&&\\
 &&&&&f&\\
 &&&&&&g\\
\end{pmatrix}\ \ \mid  \ \ a,b,c,d,e,f,g\in \K-0\Bigg\}\\
&=&\Big\{ {\rm diag}(a,b,c,d,e,f,g) \mid  a,b,c,d,e,f,g\in \K-0\Big\},  
 \end{eqnarray*}
where  $h(v_1)=av_1$, $h(v_2)=bv_2$, $h(v_3)=cv_3$, $h(v_4)=dv_4$,
 $h(v_5)=ev_5$,  $h(v_6)=fv_6$
and  $h(v_7)=gv_7$
for $h\in  {\mathcal E} ({X}_{\K})$.
Then 
$$  {\mathcal E}_{\mu_1} ({X}_{\K})=\{ {\rm diag}(a,b,c,c^{-1},b^{-1},a^{-1},1) \}  $$
$$  {\mathcal E}_{\mu_2} ({X}_{\K})=\{ {\rm diag}(a,b,c,b^{-1},c^{-1},a^{-1},1) \}  $$
$$  {\mathcal E}_{\mu_3} ({X}_{\K})=\{ {\rm diag}(a,b,c,c^{-1},a^{-1},b^{-1},1) \}  $$
$$  {\mathcal E}_{\mu_4} ({X}_{\K})=\{ {\rm diag}(a,b,c,a^{-1},c^{-1},b^{-1},1) \}  $$
for $a,b,c\in \K$
as subgroups of $  {\mathcal E} ({X}_{\K})$.
 \end{ex}



\begin{ex}\label{ex3}(c-s-${\rm depth}_{\K}(X)=2$).
(1) Let  $X(n)=S^{3}\times S^{5}\times S^{7}\times S^{9}\times S^{n}$.

When (a) $n=13$, (b) $n=15$ and  (c) $n=17$, the Hasse diagrams of ${\mathcal P}_{\K}(X(n))$ are respectively given as 
$$(a)\ \xymatrix{\mu_1 \ar@{-}[d]\\
\mu_2
}\ \ \ \ \ \ \ \ \ \ \ \ 
(b) \ \xymatrix{\mu_1 \ar@{-}[d]&\mu_3 \ar@{-}[d]\\
\mu_2 &\mu_4 
}\ \ \ \ \ \ \ \ \ 
(c) \ \xymatrix{\mu_1\ar@{-}[d]&\mu_3\ar@{-}[d]& \bullet \ \mu_5\\
\mu_2&\mu_4&
}
$$

Here the point $\mu_1$ of $(a)$ is given by 
$Dv_1=Dv_2=Dv_3=Dv_4=0$
and $Dv_5=v_1v_4t+v_2v_3t+t^{7}$.
On the other hand, $\mu_2$  is given by 
$Dv_4=v_1v_2t$ and $Dv_i$ ($i\neq 4$) are the same as  $\mu_1$.

Next, the points $\mu_k$ with 
$k=1, 2, 3, 4, 5$ of $(b)$ and   $(c)$  are given by the following differentials
$$Dv_1=Dv_2=Dv_3=0 \qquad \text {and}$$
\begin{center}{\begin{tabular}{|c|c|c|}
\hline
$(b)$ & $Dv_4$ &$Dv_5$ \\
\hline
$\mu_1$   & $0$         &$v_1v_4t^{2}+v_2v_3t^{3}+t^{8}$  \\
\hline
$\mu_2$   & $v_1v_2t$         & $v_1v_4t^{2}+v_2v_3t^{3}+t^{8}$\\
\hline
$\mu_3$   & $0$         & $v_1v_3t^3+v_2v_4t^2+t^8$\\
\hline
$\mu_4$   & $v_1v_2t$         & $v_1v_3t^3+v_2v_4t^2+t^8$\\
\hline
\end{tabular}
}
{\begin{tabular}{|c|c|c|}
\hline
$(c)$ & $Dv_4$ &$Dv_5$ \\
\hline
$\mu_1$   & $0$         &$v_1v_4t^{3}+v_2v_3t^{4}+t^{9}$  \\
\hline
$\mu_2$   & $v_1v_2t$         & $v_1v_4t^{3}+v_2v_3t^{4}+t^{9}$\\
\hline
$\mu_3$   & $0$         & $v_1v_3t^4+v_2v_4t^3+t^9$\\
\hline
$\mu_4$   & $v_1v_2t$         & $v_1v_3t^4+v_2v_4t^3+t^9$\\
\hline
$\mu_5$   & $0$         & $v_1v_2t^5+v_3v_4t+t^9$\\
\hline
\end{tabular}
}
\end{center}
Let
$$  {\mathcal E} ({X(n)}_{\K})=\{ ({\rm diag}(a,b,c,d,e),\lambda ) \mid  a,b,c,d,e\in \K-0, \lambda\in \K\}, $$
where  $f(v_1)=av_1$, $f(v_2)=bv_2$, $f(v_3)=cv_3$, $f(v_4)=dv_4$ and 
 $f(v_5)=ev_5+\lambda v_1v_2v_3$ or $f(v_5)=ev_5+\lambda v_1v_2v_4$
for $f\in  {\mathcal E} ({X(n)}_{\K})$.

For $(b)$,
\begin{eqnarray*}
  {\mathcal E}_{\mu_1} ({X({15})}_{\K})&=&\{ ({\rm diag}(a,b,b^{-1},a^{-1},1),\lambda ) \}  \\
  {\mathcal E}_{\mu_2} ({X({15})}_{\K})&=&\{ ({\rm diag}(a,a^{-2},a^{2},a^{-1},1), \lambda )\} \\ 
   {\mathcal E}_{\mu_3} ({X({15})}_{\K})&=&\{ ( {\rm diag}(a,b,a^{-1},b^{-1},1) \}, \lambda ) \\
  {\mathcal E}_{\mu_4} ({X({15})}_{\K})&=&\{ ({\rm diag}(b^{-2},b,b^{2},b^{-1},1),\lambda ) \}
\end{eqnarray*}

For $(c)$,
\begin{eqnarray*}
  {\mathcal E}_{\mu_1} ({X({17})}_{\K})&=&\{ ({\rm diag}(a,b,b^{-1},a^{-1},1), \lambda )\}  \\
  {\mathcal E}_{\mu_2} ({X({17})}_{\K})&=&\{ ({\rm diag}(a,a^{-2},a^{2},a^{-1},1),\lambda ) \} \\
  {\mathcal E}_{\mu_3} ({X({17})}_{\K})&=&\{ ({\rm diag}(a,b,a^{-1},b^{-1},1), \lambda )\}  \\
  {\mathcal E}_{\mu_4} ({X({17})}_{\K})&=&\{ ({\rm diag}(b^{-2},b,b^{2},b^{-1},1), \lambda )\} \\
  {\mathcal E}_{\mu_5} ({X({17})}_{\K})&=&\{ ({\rm diag}(a,a^{-1},c,c^{-1},1),\lambda ) \}  
\end{eqnarray*}  
for $a,b,c\in \K -0$ and $\lambda\in \K$.
\vspace{0.5cm}

(2) Let us consider (a) $X=S^{3}\times S^{5}\times S^{7}\times S^{11}\times S^{15}$ and 
 (b) $X=S^{7}\times S^{9}\times S^{11}\times S^{13}\times S^{41}$. Then
 the Hasse diagrams of ${\mathcal P}_{\K}(X)$ are respectively as follows:
$$\xymatrix{
(a)&1\ar@{-}[d]\ar@{-}[ld]\\
2&3
}\  \ \ \ \ \ \ \ \ \ \ \ \ \ 
\xymatrix{
(b)&1\ar@{-}[d]\ar@{-}[ld]\ar@{-}[rd]&\\
2&3&4
}\ 
$$ Here the differentials are  given by 

$$Dv_1=Dv_2=Dv_3=0 \quad \text {and}$$
\begin{center}
{\begin{tabular}{ |c|c|c|}
\hline
$(a)$ & $Dv_4$ &$Dv_5$ \\
\hline
$1$    & $0$         &$v_1v_4t+v_2v_3t^2+t^8$  \\
\hline
$2$     & $v_1v_2t^2$         & $v_1v_4t+v_2v_3t^2+t^8$\\
\hline
$3$     & $v_1v_3t$         & $v_1v_4t+v_2v_3t^2+t^8$\\
\hline
\end{tabular}
}\ \ \ \ \ \  \ \ \ {\begin{tabular}{ |c|c|c|}
\hline
$(b)$ & $Dv_4$ &$Dv_5$ \\
\hline
$1$    & $0$         &$v_1v_2v_3v_4t+t^{21}$  \\
\hline
$2$     & $0$         & $v_1v_2t^{13}+v_3v_4t^9+t^{21}$\\
\hline
$3$     & $0$         & $v_1v_3t^{12}+v_2v_4t^{10}+t^{21}$\\
\hline
$4$     & $0$         & $v_1v_4t^{11}+v_2v_3t^{11}+t^{21}$\\
\hline
\end{tabular}
}
\end{center}
(Note: From here on we simply denote $k$ for $\mu_k$
in the Hasse diagram  and the left column of the table.)

In the case of $(a)$, 
by degree arguments we have
$$  {\mathcal E} (X_{\K})=\{ ({\rm diag}(a,b,c,d,e),\lambda ) \mid  a,b,c,d,e\in \K-0, \lambda \in \K \}, $$
where  $f(v_1)=av_1$, $f(v_2)=bv_2$, $f(v_3)=cv_3$, $f(v_4)=dv_4$
and $f(v_5)=ev_5+\lambda v_1v_2v_3$ for $f\in  {\mathcal E} ({X}_{\K})$.
 As subgroups of $  {\mathcal E} ({X}_{\K})=\{ {\rm diag}(a,b,c,d,e)\}$, we have
\begin{eqnarray*}
  {\mathcal E}_{\mu_1} (X_{\K})&=& \{ ({\rm diag}(a,b,b^{-1},a^{-1},1),\lambda )\}\\
{\mathcal E}_{\mu_2} (X_{\K})&=& \{ ({\rm diag}(a,a^{-2},a^{2},a^{-1},1),\lambda )\}\\
  {\mathcal E}_{\mu_3} (X_{\K})&=& \{ ({\rm diag}(a,a^2,a^{-2},a^{-1},1),\lambda )\}
\end{eqnarray*}
 In the case of $(b)$, similarly as subgroups of $  {\mathcal E} ({X}_{\K})$, we have
\begin{eqnarray*} 
  {\mathcal E}_{\mu_1} (X_{\K})&=& \{ ({\rm diag}(a,b,c,d,1)\mid abcd=1 \}\\
{\mathcal E}_{\mu_2} (X_{\K})&=& \{ ({\rm diag}(a,a^{-1},c,c^{-1},1) \}\\
  {\mathcal E}_{\mu_3} (X_{\K})&=& \{ ({\rm diag}(a,b,a^{-1},b^{-1},1)\}\\
  {\mathcal E}_{\mu_4} (X_{\K})&=& \{ ({\rm diag}(a,b,b^{-1},a^{-1},1)\}
 \end{eqnarray*}
 for $a,b,c,d\in \K -0$.
\end{ex}

\begin{ex}\label{ex4}(c-s-${\rm depth}_{\K}(X)=3$).\\
 (1)  Let us consider (a)  $X=S^{3}\times S^{5}\times S^{9}\times S^{13}\times S^{17}$
and 
 (b) $X=S^{7}\times S^{9}\times S^{11}\times S^{17}\times S^{45}$. 
Then we will show that the Hasse diagrams of ${\mathcal P}_{\K}(X)$ are  
respectively as follows:
$$\xymatrix{(a)&1\ar@{-}[ld]\ar@{-}[rd]\ar@{-}[d]&\\
2\ar@{-}[rd] &3\ar@{-}[d] &4\\
&5&
}\ \ \ \ \ \  \ \ \xymatrix{
(b)&1\ar@{-}[d]\ar@{-}[ld]\ar@{-}[rd]\ar@{-}[rrd]&&\\
2&3\ar@{-}[rd]&4\ar@{-}[rd]&5\ar@{-}[d]\ar@{-}[ld]\\
&&6&7.
}\
$$
The differentials in $(a)$ and $(b)$   are  given by 
 $$Dv_1=Dv_2=0 \, \, (Dv_3=0 \, \, \text{for} \, \,  (b)) \, \, \text{and}$$   
\begin{center}
{\begin{tabular}{ |c|c|c|c|}
\hline
$(a)$  &$Dv_3$& $Dv_4$ &$Dv_5$ \\
\hline
$1$        & $0$ & $0$         &$v_1v_4t+v_2v_3t^2+t^9$  \\
\hline
$2$        & $v_1v_2t$ & $0$         & $v_1v_4t+v_2v_3t^2+t^9$\\
\hline
$3$        & $0$ & $v_1v_3t$         & $v_1v_4t+v_2v_3t^2+t^9$\\
\hline
$4$        &$0$ & $v_1v_2t^3$         & $v_1v_4t+v_2v_3t^2+t^9$\\
\hline
$5$        &$v_1v_2t$ & $v_1v_3t$         & $v_1v_4t+v_2v_3t^2+t^9$\\
\hline
\end{tabular}
}\  {\begin{tabular}{ |c|c|c|}
\hline
$(b)$ & $Dv_4$ &$Dv_5$ \\
\hline
$1$    & $0$         &$v_1v_2v_3v_4t+t^{23}$  \\
\hline
$2$     & $0$         & $v_1v_2t^{15}+v_3v_4t^9+t^{23}$\\
\hline
$3$     & $0$         & $v_1v_3t^{14}+v_2v_4t^{10}+t^{23}$\\
\hline
$4$     & $0$         & $v_1v_4t^{11}+v_2v_3t^{13}+t^{23}$\\
\hline
$5$    & $v_1v_2t$         &$v_1v_2v_3v_4t+t^{23}$  \\
\hline
$6$     & $v_1v_2t$         & $v_1v_3t^{14}+v_2v_4t^{10}+t^{23}$\\
\hline
$7$     & $v_1v_2t$         & $v_1v_4t^{11}+v_2v_3t^{13}+t^{23}$\\
\hline
\end{tabular}
}
\end{center}
In the case of 
$ (a)$, we have $$ {\mathcal E} (X_{\K})=\{ ({\rm diag}(a,b,c,d,e),\lambda ) \mid  a,b,c,d,e\in \K-0, \lambda \in \K \}$$
where  $f(v_1)=av_1$, $f(v_2)=bv_2$, $f(v_3)=cv_3$, $f(v_4)=dv_4$,
 $f(v_5)=ev_5+\lambda v_1v_2v_3$ for $f\in  {\mathcal E} ({X}_{\K})$.
In the case of $(b)$, we have 
$$ {\mathcal E} (X_{\K})=\{ {\rm diag}(a,b,c,d,e) \mid  a,b,c,d,e\in \K-0 \}. $$
The subgroups of  $ {\mathcal E} (X_{\K})$ for $(a)$ and $(b)$ are as follows:
\begin{center}
{\begin{tabular}{ |c|c|}
\hline
$(a)$  &$ {\mathcal E}_{\mu_i} (X_{\K}) $ \\
\hline
$1$        &$(a,b,b^{-1},a^{-1},1,\lambda ) $  \\
\hline
$2$        & $(b^{-2},b,b^{-1},b^{2},1,\lambda ) $\\
\hline
$3$        &$(a,a^{2},a^{-2},a^{-1},1,\lambda ) $ \\
\hline
$4$        &$(a,a^{-2},a^{2},a^{-1},1,\lambda ) $\\
\hline
$5$        &$(a,a^{2},a^{3},a^{4},1,\lambda ), \ a^5=1$\\
\hline
\end{tabular}
}\  {\begin{tabular}{ |c|c|}
\hline
$(b)$ & $ {\mathcal E}_{\mu_i} (X_{\K}) $ \\
\hline
$1$    & $(a,b,c,d,1), \ abcd=1 $  \\
\hline
$2$     & $(a,a^{-1},c,c^{-1},1) $\\
\hline
$3$     & $(a,b,a^{-1},b^{-1},1) $\\
\hline
$4$     & $(a,b,b^{-1},a^{-1},1) $\\
\hline
$5$    &   $(a,b,c,ab,1), \ a^2b^2c=1  $\\
\hline
$6$     & $(b^{-2},b,b^{2},b^{-1},1) $\\
\hline
$7$     &$(a,a^{-2},a^{2},a^{-1},1) $ \\
\hline
\end{tabular}
}
\end{center}
for $a,b,c\in \K -0$ and $\lambda \in \K$.
In the case $(a)$,  $ {\mathcal E}_{\mu_5} (X_{\K})\cong \Z_5\times \K$ when $\K\supset \Q (e^{2\pi i/5})$
and it is isomorphic to $\K$ when $\K\not\supset \Q (e^{2\pi i/5})$.

(2) Let $X=\C P^{14}\times S^{31}$. 
It is the case of $n=14$ in Theorem \ref{CP}.
Let $$M(X_{\K})=(\Lambda (x,y,z)_{\K},d_{\K}) \quad {
with}\quad |x|=2, |y|=29, |z|=31, d_{\K}y=x^{15}.$$
Since $n+1=15(=3\cdot 5)$, 
assume  $\K\supset \Q (e^{2\pi i/15}) $.
Let $(\K[t]\otimes \Lambda (x,y,z)_{\K},{D_i})$ ($i=1,2,3,4$) be the
relative model with $$D_i(x)=0,\ \ D_i( z)=xt^{15}$$ and
the differential of $y$ being  one of the following:
 \begin{eqnarray*}
D_1(y)&=&x^{15}+t^{15},\\
D_2(y)&=&x^{15}+x^3t^{12}+t^{15},\\
D_3(y)&=&x^{15}+x^5t^{10}+t^{15},\\
D_4(y)&=&x^{15}+xt^{14}+t^{15}.
\end{eqnarray*}
Then,  
we 
have the subgroups of  $ {\mathcal E} (X_{\K})=\{ {\rm diag}(a,a^{15},b)\mid a,b\in \K-0\}\cong ({\K}-0)^{\times 2}$,
which are  the following:
 \begin{eqnarray*}
  {\mathcal E}_{\mu_1} (X_{\K})&=&\{ {\rm diag}(a,1,a)\mid a^{15}=1\}=\{ a\in {\K}-0\mid a^{15}=1\}\cong \Z /15\Z ,\\
{\mathcal E}_{\mu_2} (X_{\K})&=&\{ {\rm diag}(a,1,a)\mid a^{3}=1\}=\{ a\in {\K}-0\mid a^{3}=1\}\cong \Z /3\Z ,\\
  {\mathcal E}_{\mu_3} (X_{\K})&=&\{ {\rm diag}(a,1,a)\mid a^{5}=1\}=\{ a\in {\K}-0\mid a^{5}=1\}\cong \Z /5\Z ,\\
 {\mathcal E}_{\mu_4} (X_{\K})&=&\{ {\rm diag}(1,1,1) \}\cong \{ 0\}.
  \end{eqnarray*}
Hence the Hasse diagram contains  the following:
$$
\xymatrix{&\Z_{15} \ar@{-}[ld]\ar@{-}[rd]&\\
\Z_{3}\ar@{-}[rd] & &\Z_5\ar@{-}[ld]\\
&\{ 0\}&}
$$

(3) Let $X=\C P^{8}\times S^{19}$. 
It is the case of $n=8$ in Theorem \ref{CP}. Then, when $\K\supset \Q (e^{2\pi i/9})$, the Hasse diagram 
contains the following:
$$
\xymatrix{&\Z_{9} \ar@{-}[d]\\
&\Z_{3}\ar@{-}[d]\\
&\{ 0\}
}$$


\end{ex}

\begin{ex}\label{ex2}(c-s-${\rm depth}_{\K}(X)=4$).  
 When $X=S^3\times S^5\times S^9\times S^{15}\times S^{33}$ (cf.\cite[Example 2.8]{SY}),
we have the following two cases: \\
(1)  Case of  $\K\supset \Q (e^{2\pi i/5}) $. Then $\sharp {\mathcal P}_{\K}(X)=20$. \\
(2) The other  fields $\K$. Then  $\sharp {\mathcal P}_{\K}(X)=19$.

 Indeed, the 20 (19)   types\footnote{
For example, the relative model   ${\mathcal M_{\tau}}$ with  $ Dv_3=Dv_4=0,\ Dv_5=v_1v_2v_3v_4t+v_2v_3t^{10}+t^{17}$
has same  self-equivalence  as one of  $i=1$ in the above table; i.e., $ {\mathcal E}_{\tau}(X_{\K})={\mathcal E}_{\mu_{1}}(X_{\K})$.
So ${\mathcal M}_{\tau}$  is not noted.
} of c-symplectic models $ \{  [\mu_1], .., [\mu_{20}]\}$ are given as 
$$M(Y_n)=(\K [t]\otimes \Lambda (v_1,v_2,v_3,v_4,v_5)_{\K},D_n)$$
 with  $|v_1|=3,\ |v_2|=5,\ |v_3|=9,\ |v_4|=15,\ |v_5|=33$ and
the differentials are given by  $D_nv_1=D_nv_2=0$ and \\
\begin{center}
{\begin{tabular}{|c|c |c|c|c|}
\hline
$n$ &$D_nv_3$& $D_nv_4$ &$D_nv_5$ & $\dim H^*(Y_n;\K )$\\
\hline
$1$&$0$& $0$ &$v_1v_4t^8+v_2v_3t^{10}+t^{17}$ &$272$ \\
\hline
$2$&$0$& $v_1v_2t^4$ &$v_1v_4t^8+v_2v_3t^{10}+t^{17}$ & $220$\\
\hline
$3$&$0$& $v_1v_3t^2$ &$v_1v_4t^8+v_2v_3t^{10}+t^{17}$  &$212$\\
\hline
$4$&$v_1v_2t$& $0$ &$v_1v_4t^8+v_2v_3t^{10}+t^{17}$  &$209$\\
\hline
$5$&$v_1v_2t$& $v_1v_3t^2$ &$v_1v_4t^8+v_2v_3t^{10}+t^{17}$ & $149$\\
\hline
$6$&$0$& $0$ &$v_1v_2t^{13}+v_3v_4t^5+t^{17}$  &$272$\\
\hline
$7$&$0$& $v_1v_3t^2$ &$v_1v_2t^{13}+v_3v_4t^5+t^{17}$ & $212$\\
\hline
$8$&$0$& $v_2v_3t$ &$v_1v_2t^{13}+v_3v_4t^5+t^{17}$  &$204$\\
\hline
$9$&$0$& $0$ &$v_1v_3t^{11}+v_2v_4t^7+t^{17}$  &$272$ \\
\hline
$10$&$0$& $v_1v_2t^4$ &$v_1v_3t^{11}+v_2v_4t^7+t^{17}$  &$220$\\
\hline
$11$&$0$& $v_2v_3t$ &$v_1v_3t^{11}+v_2v_4t^7+t^{17}$  &$204$\\
\hline
$12$&$v_1v_2t$& $0$ &$v_1v_3t^{11}+v_2v_4t^7+t^{17}$  &$209$\\
\hline
$13$&$v_1v_2t$& $v_2v_3t$ &$v_1v_3t^{11}+v_2v_4t^7+t^{17}$ &$144$ \\
\hline
$14$&$0$& $0$ &$v_1v_2v_3v_4t+t^{17}$  &$272$\\
\hline
$15$&$0$& $v_1v_2t^4$ &$v_1v_2v_3v_4t+t^{17}$ &$220$ \\
\hline
$16$&$0$& $v_1v_3t^2$ &$v_1v_2v_3v_4t+t^{17}$  &$212$\\
\hline
$17$&$0$& $v_2v_3t$ &$v_1v_2v_3v_4t+t^{17}$ & $204$\\
\hline
$18$&$v_1v_2t$& $0$ &$v_1v_2v_3v_4t+t^{17}$  &$209$\\
\hline
$19$&$v_1v_2t$& $v_1v_3t^2$ &$v_1v_2v_3v_4t+t^{17}$ & $149$\\
\hline
$20$&$v_1v_2t$& $v_2v_3t$ &$v_1v_2v_3v_4t+t^{17}$ & $144$\\
\hline
\end{tabular}
}
\end{center}

For degree reasons, the $\K$-homotopy self-equivalences of $X_{\K}$ are given as  
 \begin{eqnarray*}
  {\mathcal E}(X_{\K})&=&\Bigg\{ \begin{pmatrix}
a & &&&\\
 & b&&&\\
 &&c&&\\
 &&&d&\\
 &&&&e
\end{pmatrix}\ \ | \ \ a,b,c,d,e\in \K-0\Bigg\}\\
&=&\{ (a,b,c,d,e) \mid a,b,c,d,e\in \K-0\} = (\K -0)^{\times 5}
 \end{eqnarray*}
and  the subgroups ${\mathcal E}_{\mu_n}(X_{\K})$ for  $n=1\sim 20$ are given as  
the following conditions on $a,b,c,d$ with $e=1$: 
\begin{center}
{\begin{tabular}{|c|l ||c|l|}
\hline
$1$&$ ad=bc=1$& $11$ &$ bc=d, ac=b^2c=1$ \\
\hline
$2$ &  $  a^2b=bc=1,ab=d $   & $12$ &$ab=c, a^2b=bd=1$ \\
\hline
$3$ &  $  a^2c=bc=1,ac=d$   & $13$ & $ ab=c,ab^2=d,  a^2b=ab^3=1$\\
\hline
$4$ &  $ ad=ab^2=1,ab=c  $   & $14$ & $  abcd=1$ \\
\hline
$5$ &  $ab=c,a^2b=d,  a^3b=ab^2=1 $   & $15$ & $ab=d,  a^2b^2c=1$ \\
\hline
$6$ &  $ ab=cd=1 $   & $16$ &$ac=d, a^2bc^2=1$ \\
\hline
$7$ &  $ac=d,  ab=ac^2=1 $   & $17$ &$bc=d,  ab^2c^2=1$ \\
\hline
$8$ &  $  bc=d,  ab=bc^2=1$   & $18$ & $ab=c,  a^2b^2d=1$\\
\hline
$9$ &  $ ac=bd=1$   & $19$ & $ab=c,a^2b=d, a^4b^3=1$\\
\hline
$10$ &  $ab=d,  ac=ab^2=1$   & $20$ &$ab=c,ab^2=d,  a^3b^4=1$ \\
\hline
\end{tabular}
}
\end{center}

Notice 
that when $n=5$ and $13$,
$$  {\mathcal E}_{\mu_5}(X_{\K})=\{ (a,a^{2},a^{3}, a^{4},1) ; \ a^5=1\},$$
$$   {\mathcal E}_{\mu_{13}}(X_{\K})=\{ (a,a^{3},a^{4}, a^{2},1) ; \ a^5=1\} $$
are  both the identity $(1,1,1,1,1)$
in the case (2); i.e.,
$[\mu_5]=[\mu_{13}]$ in ${\mathcal P}_{\K}(X)$.
Thus the  Hasse diagram of 
${\mathcal P}_{\K}(X)$ is given as

$$\xymatrix{
( 1)&&&&&&14\ar@{-}[lllld]\ar@{-}[lld]\ar@{-}[rrrd]\ar@{-}[ld]\ar@{-}[rrd]\ar@{-}[rd]\ar@{-}[d]&&&&&&\\
&&1\ar@{-}[lld]\ar@{-}[ld]\ar@{-}[d]&&6\ar@{-}[d]\ar@{-}[ld]&17\ar@{-}[d]\ar@{-}[rd]&
18\ar@{-}[d]\ar@{-}[ld]&9\ar@{-}[d]\ar@{-}[rd]\ar@{-}[rrd]&15&16\ar@{-}[llllllldd]&&&\\
2&3\ar@{-}[rd]&4\ar@{-}[d]&7&8\ar@{-}[rrd]&19\ar@{-}[llld]&20\ar@{-}[d]&10&11\ar@{-}[lld]&12\ar@{-}[llld]&&&\\
&&5&&&&13&&&&&&\\
}$$

$$\xymatrix{
(2)&&&&&&14\ar@{-}[lllld]\ar@{-}[lld]\ar@{-}[rrrd]\ar@{-}[ld]\ar@{-}[rrd]\ar@{-}[rd]\ar@{-}[d]&&&&&&\\
&&1\ar@{-}[lld]\ar@{-}[ld]\ar@{-}[d]&&6\ar@{-}[d]\ar@{-}[ld]&17\ar@{-}[d]\ar@{-}[rd]&
18\ar@{-}[d]\ar@{-}[ld]&9\ar@{-}[d]\ar@{-}[rd]\ar@{-}[rrd]&15\ar@{-}[lldd]&16\ar@{-}[llldd]&&&\\
2\ar@{-}[rrrrrrd]&3\ar@{-}[rrrrrd]&4\ar@{-}[rrrrd]&7\ar@{-}[rrrd]&8\ar@{-}[rrd]&19\ar@{-}[rd]&20\ar@{-}[d]&10\ar@{-}[ld]&11\ar@{-}[lld]&12\ar@{-}[llld]&&&\\
&&&&&&5=13&&&&&&\\
}$$
and  c-s-${\rm depth}_{\K}(X)= 4$ in both cases.
In particular, $(2)$ is a lattice.

\end{ex}

\begin{rem}
Let  $F_{\mu}:{\mathcal E}(p_{\mu})\to {\mathcal E}(X)$
be the restriction map between  ordinary self-homotopy equivalence groups
for the fibration $(*)$ in \S 1.
From Example \ref{ex2} $(2)$ and the integral homotopy theory \cite[\S 10]{Su},
we see that Im$(F_{\mu})$ may be at most finite
even when $X$ has the rational homotopy type of the product of odd-spheres. So we would like to pose the following question.
\end{rem}

\begin{qu}
When is  Im$(F_{\mu})$ finite?
\end{qu}

\begin{rem}
M.R.Hilali conjectures that $\dim \pi_*(Y)\otimes \Q \leq \dim H^*(Y;\Q )$
for an elliptic simply connected space $Y$ \cite{HM},\cite{HM2}.
When $Y$ is c-symplectic with $\dim \pi_{even}(Y)\otimes \Q =1$,
it is true.
Indeed, when $M(Y)=(\Q [t]\otimes \Lambda (v_1,\cdots ,v_n),d)$ ($n>1$) with $|v_i|$ odd,
$\dim \pi_*(Y)\otimes \Q=n+1<((\sum_{i=1}^n|v_i|)-1)/2=\max \{m\mid t^m\neq 0 \}=:N$.
Thus it follows from $H^*(Y;\Q )\supset \{ 1,t,t^2,\cdots ,t^N\}$.  
\end{rem}

We first  speculated  that if  $[\mu_i]<[\mu_j]$ in ${\mathcal P}_{\K}(X)$, then $\dim  H^*(Y_i;\K )\leq \dim H^*(Y_j;\K )$.
But it is not true in a general field $\K$.
Indeed, we can see in the above Example \ref{ex2} $(2)$
that  $[\mu_5]<[\mu_{20}]$ but $\dim  H^*(Y_{5};\K )=149> 144=\dim H^*(Y_{20};\K )$.
Notice that the above speculation  holds  in the case $(1)$. So the following question would be reasonable.

\begin{qu}
For a sufficiently large field $\K$,
if  $[\mu_i]<[\mu_j]$
in ${\mathcal P}_{\K}(X)$, then  is $\dim  H^*(Y_i;\K )\leq \dim H^*(Y_j;\K )$ ?
\end{qu}

\begin{ex}Recall Corollary \ref{Zn}.
For example, the following Hasse diagrams of height $3$ are contained in  those of 
${\mathcal P}_{\overline{\Q}}(X)$
with  $X=\C P^n\times S^{2n+3}$ of  c-s-${\rm depth}_{\overline{\Q}}(X)=4$
for $n=26, \ 74$ and $104$, respectively:
{\small
$$
\xymatrix{\Z_{27} \ar@{-}[d]\\
\Z_{9}\ar@{-}[d]\\
\Z_{3}\ar@{-}[d]\\
\{ 0\} , }\ \ \ \ \
\xymatrix{&&\Z_{75} \ar@{-}[ld]\ar@{-}[rd]&\\
 &\Z_{15}  \ar@{-}[dr]\ar@{-}[ld]&&\Z_{25}\ar@{-}[ld]\\
\Z_3\ar@{-}[rd]& &\Z_5 \ar@{-}[ld]&\\
&\{ 0\}&&}\ \ \ \ \ \ \ 
\xymatrix{\mbox{and}&\Z_{105} \ar@{-}[ld]\ar@{-}[rd] \ar@{-}[d]&\\
\Z_{15}\ar@{-}[rd]  \ar@{-}[d]&\Z_{21}  \ar@{-}[ld] \ar@{-}[rd]&\Z_{35}\ar@{-}[ld] \ar@{-}[d]\\
\Z_3  \ar@{-}[dr]&\Z_5 \ar@{-}[d]&\Z_7 \ar@{-}[dl]\\
&\{ 0\}&
}$$
}
\end{ex}

\begin{ex}(c-s-${\rm depth}_{\K}(X)=5$).  For the exceptional simple Lie group $E_7$, the rational type is known as (see \cite{M}) 
$$ (3,11,15,19,23,27,35)$$
namely, 
$$ E_7\simeq_{\Q}  S^3 \times S^{11} \times S^{15} \times S^{19}\times S^{23}\times S^{27}\times S^{35}.$$

Then, for  $|v_1|=3,\ |v_2|=11,\ |v_3|=15,\ |v_4|=19,\ |v_5|=23,\ |v_6|=27,\ |v_7|=35$,
$$  {\mathcal E}(X_{\K})=Aut(\Lambda (v_1,v_2,v_3,v_4,v_5,v_6,v_7)_{\K},0)
\cong \Big\{ {\rm diag}(a,b,c,d,e,f,g)\mid a,b,c,d,e,f,g\in \K -0\Big\}$$
$$=\Big\{ (a,b,c,d,e,f,g)\mid a,b,c,d,e,f,g\in \K -0\Big\}=(\K-0)^{\times 7}$$  for  degree reasons.
 There are the following 20-types of $\K$-c-symplectic models, i.e.;
$$(\K [t]\otimes \Lambda (v_1,v_2,v_3,v_4,v_5,v_6,v_7)_{\K},D)$$
 with  
the differentials  given as  
$$Dv_1=Dv_2=0,$$
$$Dv_7=v_1v_6t^3+v_2v_5t+v_3v_4t+t^{18}\ \ \mbox{ and}$$
\begin{center}{\begin{tabular}{|c|c|c|c|c|c|}
\hline
$\mu$  & $Dv_3$ & $Dv_4$ &$Dv_5$ & $Dv_6$ &  $  {\mathcal E}_{\mu}(X_{\K})$ \\
\hline
$1$     & $0$& $0$         &$0$ & $0$ & $(a,b,c,c^{-1},b^{-1},a^{-1},1)$\\
\hline
$2$    & $0$ & $v_1v_3t$         &$0$ & $0$ & $(c^{-2},b,c,c^{-1},b^{-1},c^{2},1)$\\
\hline
$3$    & $0$ & $0$         &$v_1v_2t^5$ & $0$ & $(b^{-2},b,c,c^{-1},b^{-1},b^{2},1)$\\
\hline
$4$     & $0$& $0$         &$0$ & $v_1v_2t^7$ & $(a,a^{-2},c,c^{-1},a^{2},a^{-1},1)$\\
\hline
$5$    & $0$ & $0$         &$0$ & $v_1v_3t^5$ & $(a,b,a^{-2},a^{2},b^{-1},a^{-1},1)$\\
\hline
$6$     & $0$& $0$         &$0$ & $v_1v_4t^3$ & $(a,b,a^2,a^{-2},b^{-1},a^{-1},1)$\\
\hline
$7$    & $0$ & $0$         &$0$ & $v_1v_5t$ & $(a,a^2,c,c^{-1},a^{-2},a^{-1},1)$\\
\hline
$8$     & $0$& $v_1v_3t$         &$v_1v_2t^5$ & $0$ & $(b^{-2},b,\pm b,\pm b^{-1},b^{-1},b^{2},1)$\\
\hline
$9$    & $0$ & $v_1v_3t$         &$0$ & $v_1v_2t^7$ & $(c^{-2}, c^{4}, c,c^{-1}, c^{-4}, c^2,1)$\\
\hline
$10$     & $0$& $v_1v_3t$         &$0$ & $v_1v_4t^3$ & $(c^{-2},b,c,c^{-1},b^{-1},c^{2},1):\ c^5=1$\\
\hline
$11$    & $0$ & $v_1v_3t$         &$0$ & $v_1v_5t$ & $(c^{-2}, c^{-4}, c,c^{-1}, c^{4}, c^2,1)$\\
\hline
$12$    & $0$ & $0$         &$v_1v_2t^5$ & $v_1v_3t^5$ & $(b^{-2},b,b^4, b^{-4}, b^{-1},b^2,1)$\\
\hline
$13$     & $0$& $0$         &$v_1v_2t^5$ & $v_1v_4t^3$ & $(b^{-2},b,b^{-4}, b^{4}, b^{-1},b^2,1)$\\
\hline
$14$    & $0$ & $0$         &$v_1v_2t^5$ & $v_1v_5t$ & $(b^{-2},b,c,c^{-1},b^{-1},b^{2},1):\ b^5=1$\\
\hline
$15$     & $0$& $v_1v_3t$         &$v_1v_2t^5$ & $v_1v_4t^3$ & $(b^{-2},b,\pm b,\pm b^{-1},b^{-1},b^{2},1); \ b^5=\pm 1$\\
\hline
$16$     & $0$& $v_1v_3t$         &$v_1v_2t^5$ & $v_1v_5t$ & $(b^{-2},b,\pm b,\pm b^{-1},b^{-1},b^{2},1); \ b^5= 1$\\
\hline
$17$     & $0$& $0$         &$v_1v_3t^3$ & $v_2v_3t$ &$(b^{-1}c^{-1},b,c,c^{-1},b^{-1},bc,1)$\\
\hline
$18$     & $0$& $v_1v_2t^3$         &$v_1v_3t^3$ & $0$ &$(a,b,a^{-1}b^{-1}, ab,b^{-1},a^{-1},1)$\\
\hline
$(18)$     & $0$& $-v_1v_2t^3$         &$0$ & $v_2v_3t$ &$(a,b,a^{-1}b^{-1}, ab,b^{-1},a^{-1},1)$ \\
\hline
$19$     & $0$& $v_1v_2t^3$         &$2v_1v_3t^3$ & $v_2v_3t$ & $(a,b,ab, a^{-1}b^{-1},b^{-1},a^{-1},1)$\\
\hline
$(19)$     & $v_1v_2t$& $0$         &$v_1v_4t$ & $0$ & $(a,b,ab, a^{-1}b^{-1},b^{-1},a^{-1},1)$\\
\hline
$20$     & $v_1v_2t$& $-v_1v_2t^3$         &$-v_1v_4t$ & $v_2v_3t$ &  $(a,\pm a^{-1},\pm 1, \pm 1,\pm a,a^{-1},1)$\\
\hline

\end{tabular}
}
\end{center}
for   $a,b,c\in \K-0$.   
Then, for any $\K$, the 
Hasse diagram is 

$$\xymatrix{
&&&&&1\ar@{-}[lllld]\ar@{-}[rd]\ar@{-}[llld]\ar@{-}[d]\ar@{-}[ld]\ar@{-}[lld]\ar@{-}[rrd]\ar@{-}[rrrd]\ar@{-}[rrrrd]&&&&&\\
&2\ar@{-}[drrr]\ar@{-}[ld]\ar@{-}[rrrrd]&3\ar@{-}[ld]\ar@{-}[lld]\ar@{-}[rd]\ar@{-}[rrrrd]&
4\ar@{-}[ld]&5\ar@{-}[llld]&6\ar@{-}[ld]\ar@{-}[lld]&7\ar@{-}[ld]\ar@{-}[d]&17\ar@{-}[dr]&18\ar@{-}[d]&19\ar@{-}[ld]&\\
8\ar@{-}[dr]&12&9&13\ar@{-}[lld]&10\ar@{-}[llld]&11&14\ar@{-}[ddlllll]&&20&&&\\
&15\ar@{-}[d]&&&&&&&&&&\\
&16&&&&&&&&&&\\
}$$

\end{ex}

    \begin{cor}  The poset structure of ${\mathcal P}_{\K}(E_7)$ does not depend on $\K$.
Moreover
   c-s-${\rm depth}_{\K}(E_7)=5$ for any field $\K$.
 \end{cor}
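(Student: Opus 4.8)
The plan is to deduce both assertions from the explicit data assembled in the Example above, once one checks that none of that data is genuinely field‑dependent. First I would note that the $20$ relative models $\mu_1,\dots,\mu_{20}$ listed there are all defined over $\Q$: the differentials $D$ have integer coefficients and the degrees $|v_1|=3,\dots,|v_7|=35$ together with $|t|=2$ are fixed, so the space of admissible differentials, the c‑symplectic condition ($t^{18}\neq 0$ in cohomology, Poincar\'e duality), and the resulting classification up to the equivalence $\sim$ and up to $\cong_{\K}$ are carried out by exactly the same (rational) computation as over $\Q$, in the spirit of the proof of Theorem \ref{CP}. Hence the underlying set of ${\mathcal P}_{\K}(E_7)$ comes from the same list of $20$ models for every field $\K$ of characteristic zero.

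Next I would examine the subgroups ${\mathcal E}_{\mu_i}(X_{\K})\subseteq {\mathcal E}(X_{\K})\cong (\K-0)^{\times 7}$ given in the last column of the table. Each is cut out inside the diagonal torus by monomial equations with integer exponents, possibly together with a sign condition coming from $\pm 1$ (available in any characteristic‑zero field) or, only in the four cases $\mu_{10},\mu_{14},\mu_{15},\mu_{16}$, a genuine root‑of‑unity condition $c^{5}=1$. For every pair of indices avoiding these four, the two groups are the $\K$‑points of subgroup schemes of the torus defined over $\Q$ whose $\Q$‑points are Zariski dense and whose finite parts are $2$‑groups, hence rational; consequently the relations ${\mathcal E}_{\mu_i}(X_{\K})\subseteq {\mathcal E}_{\mu_j}(X_{\K})$ and ${\mathcal E}_{\mu_i}(X_{\K})= {\mathcal E}_{\mu_j}(X_{\K})$ neither appear nor disappear as $\K$ varies and can be decided once and for all over $\Q$.

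The remaining — and main — point is to control the four exceptional nodes, because over a field $\K\not\supset\Q(e^{2\pi i/5})$ the factor $\{c\mid c^{5}=1\}$ collapses to $\{1\}$, exactly the degeneration that in Example \ref{ex2}(2) produced the coincidence $[\mu_5]=[\mu_{13}]$. Here one checks directly that no such accident occurs: over $\Q$, the groups ${\mathcal E}_{\mu_{10}}(X_{\Q})$ and ${\mathcal E}_{\mu_{14}}(X_{\Q})$ are still one‑parameter subgroups varying in different coordinates (hence distinct and distinct from the sixteen non‑exceptional ones), and ${\mathcal E}_{\mu_{15}}(X_{\Q})\supsetneq {\mathcal E}_{\mu_{16}}(X_{\Q})$ already properly, the extra elements being those with $b=-1$; passing to $\overline{\Q}$ only adjoins a $\Z/5\Z$ to each of the four root‑of‑unity factors, which merges no two classes and changes no covering relation of the displayed Hasse diagram. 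This is the one genuinely case‑by‑case verification; everything else is formal. Together these steps show that ${\mathcal P}_{\K}(E_7)$ is, for every $\K$, the poset pictured in the Example.

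Finally, the depth is the height of this $\K$‑independent poset plus $1$. The chain
$$[\mu_1] > [\mu_2] > [\mu_8] > [\mu_{15}] > [\mu_{16}],$$
visible in the Hasse diagram and realized by the strict inclusions ${\mathcal E}_{\mu_{16}}(X_{\K})\subsetneq {\mathcal E}_{\mu_{15}}(X_{\K})\subsetneq {\mathcal E}_{\mu_{8}}(X_{\K})\subsetneq {\mathcal E}_{\mu_{2}}(X_{\K})\subsetneq {\mathcal E}_{\mu_{1}}(X_{\K})$ that hold over every $\K$ (proper for dimension reasons among the torus steps and, for $\mu_{15}\supsetneq\mu_{16}$, for the reason noted above), has five elements, so c-s-${\rm depth}_{\K}(E_7)\geq 5$. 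Inspecting the $20$ subgroups shows there is no strictly decreasing chain of length $6$, i.e.\ the displayed Hasse diagram has height $4$; therefore c-s-${\rm depth}_{\K}(E_7)=4+1=5$ for every field $\K$ of characteristic zero. I expect the bookkeeping in the third paragraph — ruling out the Example \ref{ex2}(2)‑type collapse for the four special nodes — to be the only real obstacle.
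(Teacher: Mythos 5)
Your proposal is correct and follows essentially the same route as the paper, which simply reads both assertions off the explicit table of the $20$ relative models, their self-equivalence subgroups of $(\K-0)^{\times 7}$, and the resulting Hasse diagram, asserting without further comment that the diagram is the same for every $\K$. Your third paragraph merely makes explicit the one point the paper leaves implicit — that the nodes $\mu_{10},\mu_{14},\mu_{15},\mu_{16}$ carrying conditions of the form $b^5=\pm1$ do not merge with other classes over fields lacking fifth roots of unity (in contrast to the collapse $[\mu_5]=[\mu_{13}]$ in Example \ref{ex2}(2)) — and your chain $[\mu_1]>[\mu_2]>[\mu_8]>[\mu_{15}]>[\mu_{16}]$ is one of the maximal chains visible in the paper's diagram.
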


\begin{ex}\label{ex5.11}
For the n-dimensional symplectic group $Sp(n)$,
the rational type is given as 
$(3,7,11,\cdots ,4n-1)$ (see \cite{M}),
namely,
$$Sp(n)\simeq_{\Q} S^3 \times S^7 \times \cdots \times S^{4k-1} \times \cdots \times S^{4n-1}.$$
It is pre-c-symplectic if $n$ is odd \cite{SY}.
Let

$$M(Sp(n)_{\K})=(\Lambda (v_1,v_2,v_3,\cdots ,v_n)_{\K},0) \quad \text{with} \quad  |v_i|=4i-1.$$
Then, for  degree reasons \cite{SY}, for the differential $D$ over a sufficiently large field  $\K$,
$$Dv_n=v_1v_{n-1}t+ v_2v_{n-2}t+ v_3v_{n-3}t+ \cdots  +v_{(n-1)/2}v_{(n+1)/2}t+ t^{2n}$$
is uniquely defined to be c-symplectic. Thus by seeing  $Dv_1, \cdots ,\ Dv_{n-1}$ we have 
the following:

(1) c-s-${\rm depth}_{\K}(Sp(1))=$c-s-${\rm depth}_{\K}(Sp(3))=1$.

(2)  c-s-${\rm depth}_{\K}(Sp(5))=3$ from the sequence $[\mu_1]>[\mu_2]>[\mu_3]$
for 
$$M({Y_i}_{\K})=(\K [t]\otimes \Lambda (v_1,v_2,v_3,v_4 ,v_5)_{\K},D)$$ 
with $Dv_5=v_1v_4t+v_2v_3t+t^5$ and 
 $$M({Y_1}_{\K})\ :\ \ Dv_1=Dv_2=0, \ Dv_3=0,\ Dv_4=0 \qquad \qquad $$
$$M({Y_2}_{\K})\ :\ \ Dv_1=Dv_2=0, \ Dv_3=v_1v_2t,\ Dv_4=0 \qquad $$
$$M({Y_3}_{\K})\ :\ \ Dv_1=Dv_2=0, \ Dv_3=v_1v_2t,\ Dv_4=v_1v_3t.$$
Then we have 
 \begin{eqnarray*}
 {\mathcal E}_{\mu_1}(X_{\K})&=&\{ (a,b,b^{-1},a^{-1},1)\},\\
 {\mathcal E}_{\mu_2}(X_{\K})&=&\{ (b^{-2},b,b^{-1},b^{2},1)\},\\
 {\mathcal E}_{\mu_3}(X_{\K})&=&\{ (b^{-2},b,b^{-1},b^{2},1);\ b^5=1\}
 \end{eqnarray*}
for   $a,b\in \K-0$
and we can directly check that the  sequence $ {\mathcal E}_{\mu_1}(X_{\K}) \supsetneq {\mathcal E}_{\mu_2}(X_{\K}) \supsetneq
 {\mathcal E}_{\mu_3}(X_{\K})$ is maximal.\\

(3)  c-s-${\rm depth}_{\K}(Sp(7))=4$ since $M({Y_i}_{\K})=(\K [t]\otimes \Lambda (v_1,v_2,v_3,v_4 ,v_5,v_6,v_7)_{\K},D)$ with 
$Dv_1=Dv_2=Dv_3=0$,
$Dv_{7}=v_1v_{6}t+ v_2v_{5}t+ v_3v_{4}t + t^{14}$
and 
\begin{center}{\begin{tabular}{|c|c|c|c|c|}
\hline
&$Dv_4$& $Dv_5$ & $Dv_6$ &   $  {\mathcal E}_{\mu_i}(X_{\K})$\\
\hline
$\mu_1$& $0$& $0$ & $0$ &   $\{ (a,b,c)\mid a,b,c\in \K-0\}$ \\
\hline
$\mu_2$&  $v_3v_1t$& $0$ & $0$ &  $a=c^{-2}$\\
\hline
$\mu_3$&  $v_3v_1t$& $v_2v_3t$ & $0$ &  $ a=c^{-2},c=b^{-2} $\\
\hline
$\mu_4$&  $v_3v_1t  $& $v_2v_3t$ & $v_1v_2t^7$ & $ a=c^{-2},c=b^{-2},b=a^{-2}$\\
\hline
\end{tabular}\\
}
\end{center}
for  $(a,b,c):={\rm diag}(a,b,c,c^{-1},b^{-1},a^{-1},1)$
and we can directly check that the  sequence $ {\mathcal E}_{\mu_1}(X_{\K}) \supsetneq {\mathcal E}_{\mu_2}(X_{\K}) \supsetneq
 {\mathcal E}_{\mu_3}(X_{\K})\supsetneq
 {\mathcal E}_{\mu_4}(X_{\K})$ is maximal.\\

(4)  c-s-${\rm depth}_{\K}(Sp(9))\geq 5$. Indeed, $M({Y_i}_{\K})=(\K [t]\otimes \Lambda (v_1,v_2,v_3,v_4 ,v_5,v_6,v_7,v_8,v_9)_{\K},D)$
with
$Dv_1=Dv_2=\cdots =Dv_4=0$, 
$Dv_{9}=v_1v_{8}t+ v_2v_{7}t+ v_3v_{6}t+ v_4v_5t + t^{18}$ and
\begin{center}{\begin{tabular}{|c|c|c|c|c|c|}
\hline
&$Dv_5$& $Dv_6$ & $Dv_7$ & $Dv_8$ &  $  {\mathcal E}_{\mu_i}(X_{\K})$\\
\hline
$\mu_1$& $0$& $0$ & $0$ & $0$ &  $\{ (a,b,c,d)\mid a,b,c,d\in \K-0\}$ \\
\hline
$\mu_2$&  $v_4v_1t$& $0$ & $0$ &  $0$ & $a=d^{-2}$\\
\hline
$\mu_3$&  $v_4v_1t$& $v_3v_1t^5$ & $0$ &  $0$ & $ a=d^{-2}=c^{-2} $\\
\hline
$\mu_4$&  $v_4v_1t  $& $v_3v_1t^5$ & $v_2v_4t^3$ & $0$ & $ a=d^{-2}=c^{-2},d=b^{-2} $\\
\hline
$\mu_5$&  $ v_4v_1t $& $v_3v_1t^5$ & $v_2v_4t^3$ & $v_1v_2t^{13}$ & $ a=d^{-2}=c^{-2},d=b^{-2} ,b=a^{-2}$\\
\hline
\end{tabular}\\
}
\end{center}
for  $(a,b,c,d):={\rm diag}(a,b,c,d,d^{-1},c^{-1},b^{-1},a^{-1},1)$.

Thus we have $ {\mathcal E}_{\mu_1}(X_{\K}) \supsetneq {\mathcal E}_{\mu_2}(X_{\K}) \supsetneq
\cdots \supsetneq {\mathcal E}_{\mu_5}(X_{\K})$; i.e.,
$$ [\mu_1]> [\mu_2]> [\mu_3]> [\mu_4]> [\mu_5] ,$$
which 
implies   c-s-${\rm depth}_{\K}(Sp(9))\geq 5$.\\

(5)  c-s-${\rm depth}_{\K}(Sp(11))\geq 6$.
Indeed, 
$$ M({Y_i}_{\K})=(\K [t]\otimes \Lambda (v_1,v_2,v_3,v_4 ,v_5,v_6,v_7,v_8,v_9,v_{10},v_{11})_{\K},D) \ \ {\rm with}$$
$$Dv_1=Dv_2=\cdots =Dv_5=0,$$
$$Dv_{11}=v_1v_{10}t+ v_2v_{9}t+ v_3v_{8}t+ v_4v_7t+v_5v_6t + t^{22}\mbox{ \ and}$$

\begin{center}{\begin{tabular}{|c|c|c|c|c|c|c|}
\hline
& $Dv_6$ & $Dv_7$ & $Dv_8$ & $Dv_9$ & $Dv_{10}$ & $  {\mathcal E}_{\mu_i}(X_{\K})$\\
\hline
$\mu_1$&  $0$ & $0$ & $0$ & $0$ & $0$ & $\{ (a,b,c,d,e)\mid a,b,c,d,e\in \K-0\}$ \\
\hline
$\mu_2$&  $v_5v_1t$ & $0$ & $0$ & $0$ & $0$ & $a=e^{-2}$\\
\hline
$\mu_3$&  $v_5v_1t$ & $v_4v_1t^5$ & $0$ & $0$ & $0$ & $ a=e^{-2}=d^{-2} $\\
\hline
$\mu_4$&  $v_5v_1t$ & $v_4v_1t^5$ & $v_3v_2t^7$ & $0$ & $0$ & $ a=e^{-2}=d^{-2},b=c^{-2} $\\
\hline
$\mu_5$&  $v_5v_1t$ & $v_4v_1t^5$ & $v_3v_2t^7$ & $v_2v_4t^2$ & $0$ & $ a=e^{-2}=d^{-2},b=c^{-2},d=b^{-2} $
\\
\hline
$\mu_6$&  $v_5v_1t$ & $v_4v_1t^5$ & $v_3v_2t^7$ & $v_2v_4t^2$ & $v_1v_3t^{15}$ & $a=e^{-2}=d^{-2},b=c^{-2},d=b^{-2},c=a^{-2} $
\\
\hline
\end{tabular}\\
}
\end{center}
for  $(a,b,c,d,e):={\rm diag}(a,b,c,d,e,e^{-1},d^{-1},c^{-1},b^{-1},a^{-1},1)$.

Thus we have $ {\mathcal E}_{\mu_1}(X_{\K}) \supsetneq {\mathcal E}_{\mu_2}(X_{\K}) \supsetneq
\cdots \supsetneq {\mathcal E}_{\mu_6}(X_{\K})$; i.e.,
$$ [\mu_1]> [\mu_2]> [\mu_3]> [\mu_4]> [\mu_5]> [\mu_6] ,$$
which 
implies  c-s-${\rm depth}_{\K}(Sp(11))\geq 6$.
\end{ex}

 In general, for $Sp(n)$, 
by setting 

$$ M(Y_{(n+1)/2})=(\Q [t]\otimes \Lambda (v_1,v_2,v_3,\cdots ,v_{n}),D)$$
with
$$Dv_1=Dv_2=\cdots =Dv_{(n-1)/2}=0,$$ 
$$Dv_i= \epsilon_i v_jv_{k}t^{2(i-j-k)+1}\ \ \ (\epsilon_i=0\mbox{ or }1)$$
where $i+j=n$ for $(n+1)/2\leq i<n$ and certain $k\in \{ 1, 2,\cdots ,(n-1)/2\}$ 
with $i\geq j+k$,
we obtain the following
\begin{thm}\label{sp}
For any odd integer $n>3$, 
  $$\mbox{c-s-}{\rm depth}_{\overline{\Q}}(Sp(n))\geq \frac{n+1}{2}.$$
\end{thm}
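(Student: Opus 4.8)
The plan is to construct, for each odd $n > 3$, an explicit chain of $\tfrac{n+1}{2}$ relative $\K$-models $M(Y_1), M(Y_2), \dots, M(Y_{(n+1)/2})$ over $(\Q[t],0)$ (where $\K = \overline{\Q}$) whose associated subgroups ${\mathcal E}_{\mu_1}(X_{\K}), \dots, {\mathcal E}_{\mu_{(n+1)/2}}(X_{\K})$ of ${\mathcal E}(X_{\K})$ form a strictly decreasing chain, and then invoke the definition of c-s-${\rm depth}$ together with Lemma \ref{equi} to conclude. Here $X \simeq_{\Q} Sp(n) \simeq_{\Q} S^3 \times S^7 \times \cdots \times S^{4n-1}$, so $M(X_{\K}) = (\Lambda(v_1, \dots, v_n)_{\K}, 0)$ with $|v_i| = 4i-1$, and by the degree argument of \cite{SY} the top differential $Dv_n = v_1 v_{n-1} t + v_2 v_{n-2} t + \cdots + v_{(n-1)/2} v_{(n+1)/2} t + t^{2n}$ is forced in any c-symplectic model, so all the freedom lies in choosing $Dv_i$ for $(n-1)/2 \le i < n$.

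The models are the ones displayed just before the theorem: $M(Y_m)$ has $Dv_1 = \cdots = Dv_{(n-1)/2} = 0$ and, for the indices $i$ with $(n+1)/2 \le i < n$, one switches on successively more terms of the form $Dv_i = v_j v_k t^{2(i-j-k)+1}$ with $i + j = n$ and a suitable $k \in \{1, \dots, (n-1)/2\}$ satisfying $i \ge j + k$ (so that the exponent on $t$ is a nonnegative integer and the degrees match: $|v_i| = |v_j| + |v_k| + 2(i-j-k)+1$ since $(4i-1) = (4j-1) + (4k-1) + 2(i-j-k)+1$). Concretely, $M(Y_1)$ has all these $Dv_i = 0$; passing from $M(Y_m)$ to $M(Y_{m+1})$ turns on one more such monomial. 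First I would verify that each of these is a legitimate c-symplectic relative model: $D^2 = 0$ is automatic since each $Dv_i$ is a product of closed generators times a power of $t$, and c-symplecticity holds because $[t^{2n-1}] \ne 0$ in $H^*(Y_m;\K)$ (the class $t^{2n-1}$ survives since $Dv_n$ has $t^{2n}$ as a summand and no lower-degree relation kills $t^{2n-1}$), giving formal dimension $fd(Y_m) = 4n-2$, which is even as required by the earlier Proposition.

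Next I would compute ${\mathcal E}_{\mu_m}(X_{\K}) = \operatorname{Im} F'_{\mu_m}$ for each $m$. Since $M(X_{\K})$ is a free algebra on generators in distinct odd degrees, ${\mathcal E}(X_{\K}) = Aut(\Lambda V_{\K}, 0)$ consists of the diagonal automorphisms $v_i \mapsto a_i v_i$ with $a_i \in \K^\times$. For a fibrewise automorphism $f$ of $(\K[t] \otimes \Lambda V_{\K}, D_m)$ fixing $t$, compatibility with $Dv_n$ forces the constraint $a_1 a_{n-1} = a_2 a_{n-2} = \cdots = a_{(n-1)/2} a_{(n+1)/2} = 1$ (comparing coefficients of the various $v_j v_{n-j} t$ and of $t^{2n}$, the latter forcing the coefficient of each surviving monomial to be $1$), so in $M(Y_1)$ already one gets the "symplectic-type" torus $(a, b, c, \dots)$ with the pairs inverse to each other and $a_n = 1$; then each additional relation $Dv_i = v_j v_k t^{2(i-j-k)+1}$ imposes one further equation $a_i = a_j a_k$, equivalently (using the pairing $a_i a_j = 1$ with $i+j=n$) a relation of the form $a_{\text{one generator}} = a_{\text{another}}^{-2}$, and the final step collapses the remaining free parameter to a root of unity. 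I would record, as in the $Sp(5), Sp(7), Sp(9), Sp(11)$ cases, that this yields ${\mathcal E}_{\mu_1}(X_{\K}) \supsetneq {\mathcal E}_{\mu_2}(X_{\K}) \supsetneq \cdots \supsetneq {\mathcal E}_{\mu_{(n+1)/2}}(X_{\K})$, each inclusion strict because each new relation genuinely cuts down the dimension (or, at the last step, the order) of the parameter group.

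The main obstacle is purely bookkeeping rather than conceptual: one must check, for general odd $n$, that the indices $i, j, k$ can be chosen so that (i) the degree identity holds and the $t$-exponent $2(i-j-k)+1$ is a nonnegative odd integer, i.e. $i \ge j+k$, (ii) the successive relations $a_i = a_j a_k$ are independent so that strict descent is maintained at every one of the $\tfrac{n+1}{2}$ steps, and (iii) the whole scheme produces exactly a chain of length $\tfrac{n+1}{2}$ in ${\mathcal P}_{\K}(X)$. I would handle (i)–(ii) by an explicit inductive choice (mirroring the pattern visible for $n=5,7,9,11$: at the $m$-th stage use the "outermost still-available" pair), and then by the definition of c-s-depth and the fact that each $\mu_m$ is a genuine c-symplectic fibration, the strictly decreasing chain $[\mu_1] > [\mu_2] > \cdots > [\mu_{(n+1)/2}]$ in ${\mathcal P}_{\K}(X)$ gives c-s-${\rm depth}_{\overline{\Q}}(Sp(n)) \ge \tfrac{n+1}{2}$, as claimed.
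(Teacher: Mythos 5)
Your proposal follows the paper's own argument essentially step for step: the paper proves the theorem precisely by exhibiting the chain of models you describe (the general prescription $Dv_1=\cdots=Dv_{(n-1)/2}=0$, $Dv_i=\epsilon_i v_jv_kt^{2(i-j-k)+1}$ displayed just before the theorem, instantiated for $Sp(5),Sp(7),Sp(9),Sp(11)$ in the preceding example), computing each ${\mathcal E}_{\mu_m}(X_{\overline{\Q}})$ inside the diagonal torus cut out by $a_ja_{n-j}=1$ and one further relation of the form $a_k=a_j^{-2}$ per step, with the last relation forcing a root of unity; your reading of which relations are imposed and why the inclusions are strict is the same as the paper's.

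One concrete slip should be corrected: your verification of c-symplecticity is wrong as stated. The formal dimension of $Y_m$ is $fd(Sp(n))-1=n(2n+1)-1$, not $4n-2$, so the condition to check is that $[t]^{(n(2n+1)-1)/2}$ is a top class, not that $[t^{2n-1}]\neq 0$ (for $n=5$ this is $[t^{27}]$, and it is certainly not implied by the mere presence of the summand $t^{2n}$ in $Dv_n$). This nonvanishing is exactly the content of the pre-c-symplectic criterion of \cite[Theorem 1.2]{SY}, which is what the paper invokes when it asserts that $Dv_n=v_1v_{n-1}t+\cdots+v_{(n-1)/2}v_{(n+1)/2}t+t^{2n}$ ``is uniquely defined to be c-symplectic''; since the added lower-degree terms $Dv_i$ do not change the formal dimension or the top power of $t$ surviving, the conclusion you want still holds, but it rests on that cited result rather than on the degree count you give. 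With that replacement, and the bookkeeping on the independence of the $(n-1)/2$ relations that you correctly identify as the remaining work (and which the paper itself only illustrates through the examples), your argument coincides with the paper's.
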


\noindent
{\bf Acknowledgements}. 
 The authors are grateful to the  referee for his/her many  valuable comments and  suggestions.




\end{document}